\newtheorem{theorem}{Theorem}[section] 
\newtheorem{proposition}[theorem]{Proposition} 
\newtheorem{lemma}[theorem]{Lemma} 
\theoremstyle{definition} 
\newtheorem{definition}[theorem]{Definition}
\theoremstyle{fancyproclaim}
\newcommand{\CC}{{\mathbb C}} 
\newcommand{\NN}{{\mathbb N}}
\newcommand{\RR}{{\mathbb R}}
\newcommand{\cB}{{\mathcal B}} 
\newcommand{\cC}{{\mathcal C}} 
\newcommand{\cD}{{\mathcal D}}
\newcommand{\cH}{{\mathcal H}}
\newcommand{\cL}{{\mathcal L}}
\newcommand{\cS}{{\mathcal S}} 
\newcommand{\cT}{{\mathcal T}}
\newcommand{\dom}{\operatorname{Dom}}
\newcommand{\ran}{\operatorname{Ran}} 
\newcommand{\ra}{\rightarrow}
\let\phi=\varphi 
\renewcommand{\ker}{\operatorname{Ker}}
\begin{document} 
\title[Selfadjoint Extensions]{Notes on (the Birmak--Kre\u\i n--Vishik theory on) selfadjoint extensions 
of semibounded symmetric operators}\thanks{\textit{Note of the second named author}: 
This is a manuscript that was circulated as the first part of the preprint 
\textit{"Two papers on selfadjoint extensions of symmetric semibounded operators", INCREST 
Preprint Series, July 1981, Bucharest, Romania,} but never published. In this \LaTeX\ typeset version, 
only typos and a few inappropriate formulations have been corrected, with respect to the original manuscript.
I decided to post it on arXiv since, taking into account recent articles, the results are still of current interest.
Tiberiu Constantinescu died in 2005.}
 
 \date{\today}
  
\author[T. Constantinescu]{Tiberiu Constantinescu} 
\address{Tiberiu Constantinescu: Department of Mathematics, National Institute for Scientific and Technical Creation,
Bd. P\u acii 220, 79622, Bucharest, Romania}
  
\author[A. Gheondea]{Aurelian Gheondea} 
\address{Aurelian Gheondea: Department of Mathematics, National Institute for Scientific 
and Technical Creation, Bd. P\u acii 220, 79622, Bucharest, Romania, \textit{current addresses:}
Department of Mathematics, Bilkent University, 06800 Bilkent, Ankara, 
Turkey, \emph{and} Institutul de Matematic\u a al Academiei Rom\^ane, C.P.\ 
1-764, 014700 Bucure\c sti, Rom\^ania} 
\email{aurelian@fen.bilkent.edu.tr \textrm{and} A.Gheondea@imar.ro} 

\begin{abstract} We give an explicit and versatile parametrization of all positive 
selfadjoint extensions
of a densely defined, closed, positive operator. In addition, we identify the Friedrichs extension
by specifying the parameter to which it corresponds.
\end{abstract} 

\subjclass[2010]{}
\keywords{}
\maketitle 

\section{Preliminaries}

Consider a closed, densely defined, symmetric operator $S$ on a Hilbert space $\cH$. 
When dealing with such an 
operator, the main problem is to extend it to a selfadjoint one. A complete result to this problem was 
given by J.~von Neumann.

A second and more difficult problem is to find all the semibounded selfadjoint 
extensions of a given semibounded symmetric operator $S$. For such an operator, the existence
of a semibounded selfadjoint extension having the same maximal semibound was solved by 
Friedrichs. The important step in this direction was done by M.G.~Kre\u\i n \cite{Krein}, and immediately
after that by M.S.~Birman and M.I.~Vishik, 
and this is what is called the Birman--Kre\u\i n--Vishik theory.

A possible approach involving quadratic forms on Hilbert spaces was recently pointed out by 
A.~Alonso and B.~Simon \cite{AlonsoSimon}.

Stimulated by this kind of investigations, the aim of this paper is to give a new and easy to 
handle parametrization of the set of all semibounded selfadjoint extensions and, 
simultaneously, new proofs to classical results are obtained.

In the
following, block-matrix representations are used with respect to appropriate
orthogonal decompositions of Hilbert spaces. 
As a starting point, we need only two results concerning completing matrix contractions. 
The first one is the Sz.-Nagy--Foia\c s Lemma \cite{NagyFoias}.

\begin{lemma}\label{l:fn}
Let $T=\left[\begin{array}{ll} T_1 & X \\ 0 & T_2\end{array}\right]$. Then $T$ is a contraction if and 
only if $T_1$, $T_2$ are contractions and $X=D_{T^*_1}CD_{T_2}$ with $C$ a contraction 
$\cD_{T_2}\ra \cD_{T_1^*}$.
\end{lemma}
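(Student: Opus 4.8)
The plan is to reduce the statement to two one‑step facts about $1\times2$ rows and $2\times1$ columns of operators, each an immediate consequence of the classical Douglas factorization (majorization) lemma, and then to apply those two facts in succession after viewing $T$ as a block row. Throughout write $T_1\colon\cH_1\to\cK_1$, $T_2\colon\cH_2\to\cK_2$, $X\colon\cH_2\to\cK_1$, so that $T\colon\cH_1\oplus\cH_2\to\cK_1\oplus\cK_2$, and $D_A=(I-A^*A)^{1/2}$, $\cD_A=\ol{\ran D_A}$.

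First I would record the two auxiliary facts. A row $[A\ B]$ is a contraction if and only if $A$ is a contraction and $B=D_{A^*}C$ for some contraction $C$ with $\ran C\subseteq\cD_{A^*}$: indeed $[A\ B][A\ B]^*=AA^*+BB^*\le I$ is equivalent to the two conditions $AA^*\le I$ and $BB^*\le I-AA^*=D_{A^*}^2$, and the second one is, by Douglas' lemma, equivalent to a factorization $B=D_{A^*}C$ with $\|C\|\le1$, which (since $D_{A^*}$ is selfadjoint) may be taken with $\ran C\subseteq\cD_{A^*}$. Passing to adjoints gives the dual column statement: $\left[\begin{smallmatrix}A\\B\end{smallmatrix}\right]$ is a contraction if and only if $A$ is a contraction and $B=C\,D_A$ for some contraction $C$ defined on $\cD_A$ (with range in the target space of $B$).

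For the main argument, split $T$ as the row $T=[\,P\ \ Q\,]$ with $P=\left[\begin{smallmatrix}T_1\\0\end{smallmatrix}\right]\colon\cH_1\to\cK_1\oplus\cK_2$ and $Q=\left[\begin{smallmatrix}X\\T_2\end{smallmatrix}\right]\colon\cH_2\to\cK_1\oplus\cK_2$. Since $P^*P=T_1^*T_1$, $P$ is a contraction exactly when $T_1$ is; and since $PP^*=\left[\begin{smallmatrix}T_1T_1^*&0\\0&0\end{smallmatrix}\right]$, one computes
\[
D_{P^*}=\begin{bmatrix}D_{T_1^*}&0\\0&I_{\cK_2}\end{bmatrix},\qquad \cD_{P^*}=\cD_{T_1^*}\oplus\cK_2 .
\]
By the row fact, $T$ is a contraction if and only if $T_1$ is a contraction and $Q=D_{P^*}C$ for a contraction $C=\left[\begin{smallmatrix}C_1\\C_2\end{smallmatrix}\right]\colon\cH_2\to\cD_{T_1^*}\oplus\cK_2$; comparing the two block‑rows, this reads $X=D_{T_1^*}C_1$ and $C_2=T_2$, with $\left[\begin{smallmatrix}C_1\\T_2\end{smallmatrix}\right]$ a contraction. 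Applying the column fact to $\left[\begin{smallmatrix}T_2\\C_1\end{smallmatrix}\right]$ (contractivity of a column does not depend on the order of its entries), this last column is a contraction if and only if $T_2$ is a contraction and $C_1=C\,D_{T_2}$ for a contraction $C\colon\cD_{T_2}\to\cD_{T_1^*}$. Substituting back gives $X=D_{T_1^*}C\,D_{T_2}$, the claimed form. Conversely, given contractions $T_1,T_2$ and $X=D_{T_1^*}C\,D_{T_2}$ with such a $C$, set $C_1:=C\,D_{T_2}$ and read the same chain of equivalences backwards to conclude that $T$ is a contraction.

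The computations here are all routine; the one point that needs care is the bookkeeping of the domains and ranges of the intertwining contractions, i.e.\ checking at each invocation of Douglas' lemma that the factor can be chosen to act between exactly the right defect spaces — in particular that it maps into $\cD_{T_1^*}$ rather than merely into $\cK_1$ — which rests on the defect operators being selfadjoint with range dense in the corresponding defect space. The block‑diagonal form of $D_{P^*}$ is precisely what makes the two one‑step lemmas compose cleanly, so no genuine obstacle remains once the auxiliary facts are in place.
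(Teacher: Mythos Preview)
Your argument is correct. The row and column facts are exactly the Douglas factorization statements you describe; the computation $PP^*=\left[\begin{smallmatrix}T_1T_1^*&0\\0&0\end{smallmatrix}\right]$ and hence $D_{P^*}=\left[\begin{smallmatrix}D_{T_1^*}&0\\0&I_{\cK_2}\end{smallmatrix}\right]$ is right; and the two applications compose to give $X=D_{T_1^*}CD_{T_2}$ with $C\colon\cD_{T_2}\to\cD_{T_1^*}$ a contraction, with the converse direction read off the same equivalences. The point you flag about taking the Douglas factor with range in $\cD_{T_1^*}$ (respectively domain $\cD_{T_2}$) is handled by the standard uniqueness clause in Douglas' lemma, since the defect operators are selfadjoint.

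There is nothing to compare your proof against: the paper does not prove this lemma but simply quotes it as the Sz.-Nagy--Foia\c s Lemma, with a reference. Your route via Douglas' lemma is one of the standard proofs of this classical result and is entirely in the spirit of the paper, which invokes Douglas' theorem elsewhere (Lemma~\ref{l:rana}).
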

Here, for a given contraction $C\colon\cH_1\ra\cH_2$ and Hilbert spaces $\cH_1$ and $\cH_2$, 
we denote by $D_C=(I_{\cH_1}-C^*C)^{1/2}$,
the \emph{defect operator} of $C$, and by $\cD_C=\overline{D_C\cH_1}$, the 
\emph{defect space} of $C$.

The second one is a recently obtained result of Gr.~Arsene and 
A.~Gheondea \cite{ArseneGheondea}. Let 
$A_\mathrm{r}=[A\quad B]$ and $A_\mathrm{c}=\left[\begin{matrix} A \\ C\end{matrix}\right]$ be two
contractions. Then, by Lemma~\ref{l:fn} there exist unique contractions $\Gamma_1$ valued in 
$\cD_{A^*}$ and $\Gamma_2$ defined in $\cD_A$ such that 
$A_\mathrm{r}=[A\quad D_{A^*}\Gamma_1]$ and $A_\mathrm{c}=\left[\begin{matrix} A \\ 
\Gamma_2D_A\end{matrix}\right]$.

\begin{lemma}\label{l:ag}
There exists a bijective correspondence between the sets $\{T=\left[\begin{array}{ll} A & B \\ C & X
\end{array}\right]\mid T\mbox{ is a contraction}\}$ and $\{\Gamma\colon\cD_{\Gamma_1}\ra
\cD_{\Gamma_2^*}\mid \Gamma\mbox{ is a contraction}\}$ given by the formula
\begin{equation*}
T(\Gamma)=\left[\begin{array}{ll} A & D_{A^*}\Gamma_1 \\ \Gamma_2 D_A & 
-\Gamma_2 A^* \Gamma_1+D_{\Gamma_2^*}\Gamma D_{\Gamma_1}
\end{array}\right]. 
\end{equation*}
\end{lemma}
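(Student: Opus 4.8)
\medskip
\noindent\emph{Outline of a proof.} The plan is to recover $T$ from its (given, contractive) first column and its prescribed $(1,2)$-entry by two successive uses of the completion lemmas above. Regard $T=[\,A_\mathrm{c}\quad \widetilde B\,]$ as a row, where $A_\mathrm{c}=\left[\begin{matrix}A\\ C\end{matrix}\right]$ and $\widetilde B=\left[\begin{matrix}B\\ X\end{matrix}\right]$. By the row form of Lemma~\ref{l:fn} (as used in the discussion preceding Lemma~\ref{l:ag}), $T$ is a contraction if and only if $\widetilde B=D_{A_\mathrm{c}^*}G$ for a unique contraction $G\colon\cH_2\to\cD_{A_\mathrm{c}^*}$. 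So the point is to describe $D_{A_\mathrm{c}^*}$ and the subspace $\cD_{A_\mathrm{c}^*}$ finely enough to separate the constraint ``the first component of $\widetilde B$ equals $B=D_{A^*}\Gamma_1$'' from the freedom left to $X$.

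The key step is the factorization
\begin{equation*}
I-A_\mathrm{c}A_\mathrm{c}^*=RR^*,\qquad
R=\left[\begin{matrix} D_{A^*} & 0 \\ -\Gamma_2 A^* & D_{\Gamma_2^*}\end{matrix}\right]\colon\cD_{A^*}\oplus\cD_{\Gamma_2^*}\to\cK_1\oplus\cK_2 ,
\end{equation*}
which I would check by a direct block computation from $C=\Gamma_2 D_A$ and the intertwining relations $AD_A=D_{A^*}A$, $A^*D_{A^*}=D_AA^*$; the latter also gives $A^*\cD_{A^*}\subseteq\cD_A$ (so that $\Gamma_2A^*$ makes sense on $\cD_{A^*}$) and, symmetrically, $A\cD_A\subseteq\cD_{A^*}$. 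One checks moreover that $R^*$ has dense range, i.e.\ $\overline{\ran R^*}=\cD_{A^*}\oplus\cD_{\Gamma_2^*}$. Since $RR^*=D_{A_\mathrm{c}^*}^2$, the standard consequence of an identity $N^*N=M^*M$ (applied with $N=R^*$, $M=D_{A_\mathrm{c}^*}$, so that $M=VN$ for an isometry $V$ with domain $\overline{\ran N}$) yields an isometry $V\colon\cD_{A^*}\oplus\cD_{\Gamma_2^*}\to\cD_{A_\mathrm{c}^*}$ with $D_{A_\mathrm{c}^*}=VR^*$, hence also $D_{A_\mathrm{c}^*}=RV^*$; as $\ran V$ is closed and contains the dense subspace $\ran D_{A_\mathrm{c}^*}$ of $\cD_{A_\mathrm{c}^*}$, the isometry $V$ is in fact unitary. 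Therefore $G\mapsto H:=V^*G$ is a bijection from the contractions $\cH_2\to\cD_{A_\mathrm{c}^*}$ onto the contractions $\cH_2\to\cD_{A^*}\oplus\cD_{\Gamma_2^*}$, and under it the equation $\widetilde B=D_{A_\mathrm{c}^*}G$ becomes $\widetilde B=RH$.

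It then remains to read off the formula. Writing $H=\left[\begin{matrix} H_1\\ H_2\end{matrix}\right]$ with $H_1\colon\cH_2\to\cD_{A^*}$ and $H_2\colon\cH_2\to\cD_{\Gamma_2^*}$, the identity $\widetilde B=RH$ splits as $D_{A^*}H_1=B=D_{A^*}\Gamma_1$ and $X=-\Gamma_2A^*H_1+D_{\Gamma_2^*}H_2$. Since $D_{A^*}$ is injective on $\cD_{A^*}$ and $H_1,\Gamma_1$ are valued there, the first equation forces $H_1=\Gamma_1$, so the free contraction is $H=\left[\begin{matrix}\Gamma_1\\ H_2\end{matrix}\right]$; by the column form of Lemma~\ref{l:fn} (again as in the discussion preceding Lemma~\ref{l:ag}) this is a contraction if and only if $H_2=\Gamma D_{\Gamma_1}$ for a unique contraction $\Gamma\colon\cD_{\Gamma_1}\to\cD_{\Gamma_2^*}$. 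Substituting back gives $X=-\Gamma_2A^*\Gamma_1+D_{\Gamma_2^*}\Gamma D_{\Gamma_1}$, i.e.\ $T=T(\Gamma)$, and composing the three bijections $T\leftrightarrow G\leftrightarrow H\leftrightarrow\Gamma$ yields the asserted correspondence.

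I expect the main obstacle to be the middle step: performing the factorization $I-A_\mathrm{c}A_\mathrm{c}^*=RR^*$ with correct bookkeeping of the defect spaces and of the inclusions $A\cD_A\subseteq\cD_{A^*}$ and $A^*\cD_{A^*}\subseteq\cD_A$; verifying that $R^*$ really has dense range in all of $\cD_{A^*}\oplus\cD_{\Gamma_2^*}$; and upgrading the isometry supplied by $N^*N=M^*M$ to a genuine unitary $V$, so that contractivity and the range conditions transfer exactly and the correspondence comes out bijective. As a sanity check against an off-by-a-defect slip, one can verify that for $\Gamma_1=0$, or for $\Gamma_2=0$, the formula $T(\Gamma)$ degenerates to the expected answers. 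The two appeals to Lemma~\ref{l:fn} are then routine.
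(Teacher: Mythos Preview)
The paper does not supply a proof of Lemma~\ref{l:ag}; it is quoted as a known result from \cite{ArseneGheondea}. Your outline is correct and is, in fact, essentially the argument given in that reference: one factorizes $I-A_\mathrm{c}A_\mathrm{c}^*=RR^*$ with the lower-triangular
\begin{equation*}
R=\left[\begin{matrix} D_{A^*} & 0 \\ -\Gamma_2 A^* & D_{\Gamma_2^*}\end{matrix}\right],
\end{equation*}
identifies $\cD_{A_\mathrm{c}^*}$ unitarily with $\cD_{A^*}\oplus\cD_{\Gamma_2^*}$ via the Douglas lemma, and then invokes the row/column parametrizations derived from Lemma~\ref{l:fn} twice. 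The potential difficulties you single out---the inclusions $A\cD_A\subseteq\cD_{A^*}$ and $A^*\cD_{A^*}\subseteq\cD_A$ needed for $R$ to make sense, the density of $\ran R^*$ in $\cD_{A^*}\oplus\cD_{\Gamma_2^*}$, and the upgrade of the Douglas isometry to a unitary---are precisely the points requiring care, and each is routine once observed. The injectivity of $D_{A^*}$ on $\cD_{A^*}$ that you use to conclude $H_1=\Gamma_1$ is automatic, since $\ker D_{A^*}=\cD_{A^*}^\perp$. So there is nothing in this paper to compare your argument against, but the argument itself is sound.
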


We make use of the old idea of M.G.~Kre\u\i n to introduce a special kind of Cayley transform so we 
first make some preparations.

\section{The Cayley Transform}

Let us consider a closed, densely defined, 
symmetric operator $S_0$ acting on a Hilbert space $\cH$. Suppose
also that it is bounded from below, i.e. there exists $a\in\RR$ such that
\begin{equation}\label{e:sm}
\langle S_0h,h\rangle\geq a\|h\|^2,\mbox{ for all }h\in\dom(S_0).
\end{equation}

If $m(S_0)$ is the largest real number $a$ such that \eqref{e:sm} holds true then for every 
$m_0\leq m(S_0)$ the operator $S=S_0-m_0I$ is positive and it is easy to check that, in order to 
find all selfadjoint extensions of $S_0$ bounded from below by $m_0$, we have to find all
positive selfadjoint extensions of $S$, \cite{Krein}.

Consider a densely defined positive operator $S$, that is, \eqref{e:sm} holds with $a=0$. Then 
$S$ is closable and hence, without restricting the generality, we can assume that $S$ is closed. 
For every $h\in\dom(S)$, here and in throughout $\dom(S)$ denotes the domain of $S$, we get
\begin{align}\label{e:ips} \|(I+S)h\|^2 & = \|h\|^2+2\langle Sh,h\rangle+\|Sh\|^2 \\
& \geq \|h\|^2-2\langle Sh,h\rangle+\|Sh\|^2=\|(I-S)h\|^2.\nonumber
\end{align}
It follows that $I+S$ is one-to-one and $\ran(I+S)$, the range of $I+S$, is closed. These all 
enable us to define the operator
\begin{equation}\label{e:cs}
C(S)=T\colon\ran(I+S)\ra\cH,\quad T=(I-S)(I+S)^{-1},
\end{equation}
 and this is what we call the \emph{Cayley transform} of $S$. 
 By means of \eqref{e:ips} one can easily prove
 that $T\colon\dom(T)\ra\cH$ is a contraction, hence bounded. Moreover, $T$ is symmetric, since
 \begin{align*}
 \langle T(I+S)h,(I+S)g\rangle & =\langle (I-S)h,(I-S)g\rangle\\
 & =\langle h,g\rangle-\langle Sh,g\rangle +\langle h,Sg\rangle-\langle Sh,Sg\rangle \\
 & =\langle h,g\rangle -\langle h,Sg\rangle +\langle Sh,g\rangle -\langle Sh,Sg\rangle \\
 & = \langle (I+S)h,(I-S)g\rangle = \langle (I+S)h,T(I+S)g\rangle,\quad h,g\in\dom(S). 
 \end{align*}
 Since
\begin{equation*}
(I+T)(I+S)h=(I+S)h+(I-S)h=2h,\quad h\in\dom(S),
\end{equation*}
$I+T$ is one-to-one, and $\ran(I+T)$ is dense in $\cH$.

Conversely, suppose that $T$ is a symmetric contraction with $\dom(T)$ closed and such that 
$\overline{\ran(I+T)}=\overline{(I+T)\dom(T)}=\cH$. Then $I+T$ is one-to-one, hence one can introduce the operator
\begin{equation}\label{e:ct}
C^{-1}(T)=S\colon\ran(I+T)\ra\cH,\quad S=(I-T)(I+T)^{-1},
\end{equation}
and from the assumptions on $T$ one can prove easily that $S$ is a positive closed and densely
defined operator on $\cH$. We have proven

\begin{lemma}[M.G.~Kre\u\i n] \label{l:k1}
The Cayley transform is bijective between the set of all positive closed and densely defined
operators $S$ on $\cH$ and the set of all symmetric contractions 
$T\colon\dom(T)\subseteq\cH\ra\cH$, with $\dom(T)$ closed and $\overline{(I+T)\dom(T)}=\cH$.
\end{lemma}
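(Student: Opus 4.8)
The plan is to show that the formula \eqref{e:ct} defines a two-sided inverse for the Cayley transform $C$, and thereby a bijection between the two sets in question. The preceding discussion has already carried out the two "one direction at a time" verifications: $C$ carries a positive, closed, densely defined $S$ to a symmetric contraction $T=C(S)$ with $\dom(T)=\ran(I+S)$ closed and $\overline{(I+T)\dom(T)}=\cH$, while $C^{-1}$ of \eqref{e:ct} carries any such $T$ back to a positive, closed, densely defined operator. So the only thing left is the pair of composition identities $C^{-1}(C(S))=S$ and $C(C^{-1}(T))=T$, and these force $C$ to be injective and surjective between the two sets.

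For the first identity I would begin by pinning down the domains. From the already-noted relations $(I+T)(I+S)h=2h$ and $(I-T)(I+S)h=(I+S)h-(I-S)h=2Sh$ for $h\in\dom(S)$, together with $\dom(C(S))=\ran(I+S)$, one reads off $\ran(I+C(S))=\{2h\mid h\in\dom(S)\}=\dom(S)$, so $C^{-1}(C(S))$ has exactly the domain of $S$; then evaluating $(I-T)(I+T)^{-1}$ on $k=(I+T)(I+S)h=2h$ (legitimate since $I+T$ is one-to-one) gives $(I-T)(I+S)h=2Sh=Sk$, i.e.\ $C^{-1}(C(S))=S$. The second identity is obtained symmetrically: for an admissible $T$ and $S=(I-T)(I+T)^{-1}$, putting $k=(I+T)g$ for $g\in\dom(T)$ one has $Sk=(I-T)g$, whence $(I+S)k=2g$ and $(I-S)k=2Tg$; this yields $\ran(I+S)=\dom(T)$ and, on $f=2g$, $C(S)f=(I-S)k=2Tg=Tf$, so $C(C^{-1}(T))=T$.

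I do not anticipate a real obstacle here; the content of the lemma is essentially the domain bookkeeping, and the one place to be careful is the two injectivity facts that make the inverses in \eqref{e:cs}--\eqref{e:ct} meaningful ($I+S$ one-to-one with closed range by \eqref{e:ips}, and $I+T$ one-to-one from $(I+T)(I+S)h=2h$ in one direction and by hypothesis in the other). For a fully self-contained write-up one should also record the facts used above as black boxes: that $C(S)$ is a symmetric contraction with closed domain and $\overline{\ran(I+C(S))}=\cH$ (the text's argument via \eqref{e:ips} and the symmetry computation), and that $C^{-1}(T)$ is positive, closed and densely defined --- positivity because $\langle(I-T)g,(I+T)g\rangle=\|g\|^2-\|Tg\|^2\ge0$ (using $\langle Tg,g\rangle\in\RR$ since $T$ is symmetric), density of $\dom(C^{-1}(T))=(I+T)\dom(T)$ by hypothesis, and closedness because $g\mapsto((I+T)g,(I-T)g)$ is bounded below by $\sqrt2\,\|g\|$ on the closed subspace $\dom(T)$, so its range, which equals the graph of $C^{-1}(T)$, is closed.
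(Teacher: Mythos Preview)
Your proposal is correct and follows the same approach as the paper: the paper's proof is precisely the discussion preceding the lemma, which builds $C$ and $C^{-1}$ and verifies (or asserts) that each lands in the correct target set, after which the lemma is simply declared proven. You have supplied the composition identities $C^{-1}(C(S))=S$ and $C(C^{-1}(T))=T$ that the paper leaves implicit, and you have spelled out the ``easy'' verifications (positivity, density, closedness of $C^{-1}(T)$) that the paper waves at; the only small imprecision is that injectivity of $I+T$ in the converse direction is not literally a hypothesis but follows from symmetry together with the density assumption $\overline{(I+T)\dom(T)}=\cH$ (if $(I+T)g=0$ then $g\perp (I+T)\dom(T)$, hence $g=0$).
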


The following result is also essential for our approach.

\begin{lemma}[M.G.~Kre\u\i n]\label{l:k2} 
For a given positive, densely defined, and 
closed operator $S$ on $\cH$, the Cayley transform \eqref{e:cs}
is bijective between the sets $\cS$ and $\cT$
\begin{align*}
\cS & =\{\widetilde S\mid \widetilde S\mbox{ is positive selfadjoint, } \widetilde S|\dom(S)=S\},\\
\cT & = \{\widetilde T\mid \widetilde T\mbox{ is a symmetric contraction on }\cH,\ 
\widetilde T|\dom(T)=T\}.
\end{align*}
\end{lemma}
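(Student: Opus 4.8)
The plan is to exploit Lemma~\ref{l:k1} as a dictionary and to transport the extension problem for $S$ into an extension problem for its Cayley transform $T=C(S)$. Since Lemma~\ref{l:k1} already gives a bijection between positive, closed, densely defined operators and symmetric contractions with closed domain whose $(I+\cdot)$-range is dense, the key point is to check that this bijection is compatible with the partial order ``is an extension of''. Concretely, I would first show: if $S$ and $\widetilde S$ are two positive closed densely defined operators with $\widetilde S\supseteq S$, then $C(\widetilde S)\supseteq C(S)$, and conversely. The forward direction is immediate from the defining formula \eqref{e:cs}: for $h\in\dom(S)\subseteq\dom(\widetilde S)$ we have $(I+\widetilde S)h=(I+S)h$, so $\ran(I+S)\subseteq\ran(I+\widetilde S)$ and $C(\widetilde S)$ agrees with $C(S)=(I-S)(I+S)^{-1}$ on $\ran(I+S)$. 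The converse direction uses \eqref{e:ct} in the same way: if $\widetilde T\supseteq T$ then $\dom(T)\subseteq\dom(\widetilde T)$, and on $(I+T)\dom(T)$ the operator $(I-\widetilde T)(I+\widetilde T)^{-1}$ coincides with $(I-T)(I+T)^{-1}=S$, so $C^{-1}(\widetilde T)\supseteq C^{-1}(T)$.

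Granting this order compatibility, the proof of Lemma~\ref{l:k2} reduces to a bookkeeping matter. Given $\widetilde S\in\cS$, i.e.\ $\widetilde S$ positive selfadjoint extending $S$, it is in particular positive, closed (selfadjoint operators are closed) and densely defined, so Lemma~\ref{l:k1} assigns to it a symmetric contraction $\widetilde T=C(\widetilde S)$ on $\cH$; moreover $\dom(\widetilde T)=\ran(I+\widetilde S)=\cH$ because $\widetilde S$ is selfadjoint (here one invokes that for a positive selfadjoint operator $I+\widetilde S$ is boundedly invertible, so its range is all of $\cH$). By the order compatibility above, $\widetilde T\supseteq T$, hence $\widetilde T\in\cT$. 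In the other direction, given $\widetilde T\in\cT$, that is, a symmetric contraction on all of $\cH$ extending $T$, one has $\dom(\widetilde T)=\cH$ closed and — since $\ran(I+\widetilde T)\supseteq\ran(I+T)$ which is already dense — the hypothesis $\overline{(I+\widetilde T)\dom(\widetilde T)}=\cH$ of Lemma~\ref{l:k1} is met, so $\widetilde S=C^{-1}(\widetilde T)$ is positive, closed, densely defined, and extends $S$. It remains to verify that $\widetilde S$ is actually selfadjoint: this follows because a symmetric contraction defined on all of $\cH$ is automatically bounded and selfadjoint (a symmetric everywhere-defined operator is selfadjoint), and one transports selfadjointness back through the Cayley transform; alternatively, one checks directly that $\ran(I+\widetilde S)=\dom(\widetilde T)=\cH$ together with symmetry of $\widetilde S$ forces $\widetilde S=\widetilde S^{\,*}$. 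That the two assignments are mutually inverse is inherited verbatim from Lemma~\ref{l:k1}.

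The step I expect to be the main obstacle — or at least the one requiring the most care — is the equivalence ``$\widetilde S$ selfadjoint $\iff$ $\dom(C(\widetilde S))=\cH$'' for $\widetilde S$ in the class under consideration. One direction (selfadjoint $\Rightarrow$ full domain) rests on the fact that $I+\widetilde S$ has a bounded everywhere-defined inverse, which is standard for positive selfadjoint operators; the subtle direction is that a symmetric contraction with full domain must come from a selfadjoint operator, for which one needs that $\ran(I+\widetilde T)$ is not merely dense but closed — and this is exactly what the contraction estimate \eqref{e:ips}, applied to $\widetilde S$, guarantees, since it shows $\|(I+\widetilde S)h\|\ge\|(I-\widetilde S)h\|$ and more to the point that $I+\widetilde T$ is bounded below on the relevant subspace. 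Once this is in hand, everything else is a direct application of Lemma~\ref{l:k1} and the order compatibility of the Cayley transform, and no further computation is needed.
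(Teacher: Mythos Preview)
Your proof is correct and follows the same approach as the paper: the paper's proof consists of the single observation that a positive operator $R$ is selfadjoint if and only if $(I+R)\dom(R)=\cH$ (cited from \cite{Kato}), followed by an appeal to Lemma~\ref{l:k1}. You have simply unpacked this two-line sketch in detail, correctly identifying the selfadjointness criterion $\ran(I+\widetilde S)=\cH$ as the crux and supplying the order-compatibility verification that the paper leaves implicit.
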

\begin{proof}
It is a classical result, see \cite{Kato}, that a positive operator $R$ is selfadjoint if and only if 
$(I+R)\dom(R)=\cH$. Finally, use Lemma~\ref{l:k1}.
\end{proof}

\begin{definition}[\cite{Kato}, \cite{ReedSimon}] Suppose $R_1$ and $R_2$ are two positive
selfadjoint operators on $\cH$. Then $R_1\leq R_2$ means
\begin{equation}\label{e:aud}
 \dom(R_2^{1/2})\subseteq\dom(R_1^{1/2})\mbox{ and } 
\|R_1^{1/2}\xi\|\leq \|R_2^{1/2}\xi\|\mbox{ for all }\xi\in\dom(R_2^{1/2}).
\end{equation}
\end{definition}

\begin{lemma}\label{l:ord}
If $R_1$ and $R_2$ are two positive selfadjoint operators on $\cH$ then $R_1\leq R_2$ if and only
if $C(R_1)\geq C(R_2)$.
\end{lemma}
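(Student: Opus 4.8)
The claim is a monotonicity/anti-monotonicity statement linking the order on positive selfadjoint operators (defined via square-root form domains) with the order on their Cayley transforms (contractions, compared in the usual sense $C(R_1)\geq C(R_2)$ meaning $C(R_1)-C(R_2)\geq 0$). My first move is to record the algebraic identity that makes the Cayley transform legible for this purpose. For a positive selfadjoint $R$ with $T=C(R)=(I-R)(I+R)^{-1}$, one has $I-T = 2R(I+R)^{-1}$ and $I+T = 2(I+R)^{-1}$, hence
\begin{equation*}
I - T = R\,(I+T), \qquad\text{equivalently}\qquad (I+R)^{-1} = \tfrac12 (I+T).
\end{equation*}
So $(I+R)^{-1}$ is, up to the harmless factor $\tfrac12$, the ``positive part'' encoding of $T$, and $T_1\le T_2$ as contractions is the same as $(I+R_1)^{-1}\le (I+R_2)^{-1}$ as bounded positive operators. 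The slogan is: ordering the Cayley transforms $=$ ordering the resolvents $(I+R)^{-1}$ at the point $-1$.

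**Reduction to a resolvent comparison.** Thus the lemma becomes: $R_1\le R_2$ (in the form sense of \eqref{e:aud}) if and only if $(I+R_1)^{-1}\ge (I+R_2)^{-1}$ (in the bounded operator sense). This is a standard fact, but since the paper works from scratch I would prove it. For the direction ($\Rightarrow$): the form inequality says $\dom(R_2^{1/2})\subseteq\dom(R_1^{1/2})$ and $\|R_1^{1/2}\xi\|\le\|R_2^{1/2}\xi\|$ there; adding $\|\xi\|^2$ to both sides gives that the closed form $\mathfrak{t}_1[\xi]=\|\xi\|^2+\|R_1^{1/2}\xi\|^2$ is dominated by $\mathfrak{t}_2[\xi]=\|\xi\|^2+\|R_2^{1/2}\xi\|^2$ on $\dom(\mathfrak{t}_2)$. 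The bounded positive operators associated to the \emph{inverses} of the operators of these forms are exactly $(I+R_1)^{-1}$ and $(I+R_2)^{-1}$, and domination of closed positive forms is equivalent to the reverse inequality of the bounded inverse operators — this is the well-known fact that $A\le B$ for positive selfadjoint $A,B$ (in the form sense) is equivalent to $(A+\varepsilon)^{-1}\ge(B+\varepsilon)^{-1}$ for one (hence all) $\varepsilon>0$. I would either cite this from \cite{Kato}/\cite{ReedSimon} or reprove it via the computation $\langle (I+R)^{-1}\eta,\eta\rangle = \sup\{\,2\Re\langle\eta,\xi\rangle - \|\xi\|^2 - \|R^{1/2}\xi\|^2 : \xi\in\dom(R^{1/2})\,\}$, which is manifestly monotone decreasing in the form, giving both implications at once. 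For ($\Leftarrow$), run the same variational identity backwards.

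**Finishing and the main obstacle.** Once the resolvent comparison is in hand, the lemma is immediate: $C(R_1)\ge C(R_2)$ means $T_1-T_2\ge0$, i.e. $(I+T_1)-(I+T_2)\ge0$, i.e. $2(I+R_1)^{-1}-2(I+R_2)^{-1}\ge0$, i.e. $(I+R_1)^{-1}\ge(I+R_2)^{-1}$, which by the previous paragraph is equivalent to $R_1\le R_2$. I expect the only real subtlety to be handling the unbounded, form-theoretic nature of \eqref{e:aud} cleanly: the inequality $R_1\le R_2$ is \emph{not} an inequality of operators (their domains need not be comparable), so one must genuinely pass through either quadratic forms or the variational formula for $\langle(I+R)^{-1}\eta,\eta\rangle$ rather than manipulating $R_1,R_2$ directly. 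If one prefers to stay self-contained and avoid invoking the form-ordering theorem, the cleanest route is: first show directly that for $\xi=(I+R)^{-1}\eta$ one has $\langle(I+R)^{-1}\eta,\eta\rangle = \|\xi\|^2+\|R^{1/2}\xi\|^2$ and that this equals the stated supremum over $\dom(R^{1/2})$ (Cauchy--Schwarz for the $\le$, the choice $\xi=(I+R)^{-1}\eta$ for the $\ge$); monotonicity of the supremum in the form then does everything. That variational identity is the one computational step worth writing out; the rest is bookkeeping.
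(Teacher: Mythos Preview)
Your proposal is correct and follows the same overall strategy as the paper: both reduce the claim to the Kato resolvent-ordering theorem
\[
R_1\le R_2 \quad\Longleftrightarrow\quad (I+R_1)^{-1}\ge (I+R_2)^{-1},
\]
which the paper cites from \cite{Kato} just as you do. The difference is in the bridge between the Cayley ordering and the resolvent ordering. You use the clean algebraic identity $C(R)=2(I+R)^{-1}-I$, so that $C(R_1)\ge C(R_2)$ is \emph{literally} $(I+R_1)^{-1}\ge (I+R_2)^{-1}$ up to a factor of $2$; this is a one-line reduction (and in fact the paper itself records this identity later, in the proof of Lemma~\ref{l:tem}). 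The paper instead fixes $\xi\in\cH$, writes $\xi=(I+R_1)h=(I+R_2)g$, and shows by an explicit computation on $h,g$ that $\langle C(R_1)\xi,\xi\rangle\ge\langle C(R_2)\xi,\xi\rangle$ is equivalent to $\langle (I+R_1)^{-1}\xi,\xi\rangle\ge\langle (I+R_2)^{-1}\xi,\xi\rangle$. Your route is shorter; the paper's is more hands-on but arrives at the same place. Your offer to prove the Kato equivalence via the variational formula $\langle (I+R)^{-1}\eta,\eta\rangle=\sup_{\xi\in\dom(R^{1/2})}\bigl(2\Re\langle\eta,\xi\rangle-\|\xi\|^2-\|R^{1/2}\xi\|^2\bigr)$ is a nice self-contained bonus that the paper does not include.
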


\begin{proof}
We use essentially the following result from \cite{Kato}, VI.2, Theorem 2.21,
\begin{equation}\label{e:aud}
R_1\leq R_2\mbox{ if and only if } (I+R_1)^{-1}\geq (I+R_2)^{-1},
\end{equation}
the order from the right hand side being the usual one for bounded selfadjoint operators.

Suppose $R_1\leq R_2$. What we have to prove is that for every $\xi\in\cH$ it holds
\begin{equation}\label{e:pai}
\langle (I-R_1)(I+R_1)^{-1}\xi,\xi\rangle \geq \langle (I-R_2)(I+R_2)^{-1}\xi,\xi\rangle.
\end{equation}
To this end, for a fixed $\xi\in\cH$ there exist two uniquely determined vectors $h\in\dom(R_1)$ 
and $g\in\dom(R_2)$ such that
\begin{equation}\label{e:iau}
(I+R_1)h=\xi=(I+R_2)g,
\end{equation}
therefore \eqref{e:pai} is equivalent with
\begin{equation*}
\langle (I-R_1)h,(I+R_1)h\rangle \geq \langle (I-R_2)g,(I+R_2)g\rangle,
\end{equation*}
and this holds if and only if
\begin{equation}\label{e:noh}
\|h\|^2-\|R_1h\|^2\geq \|g\|^2-\|R_2g\|^2.
\end{equation}
Making use of \eqref{e:iau}, it follows that
\begin{equation*}
\|(I+R_1)h\|^2=\|(I+R_2)g\|^2,
\end{equation*}
hence
\begin{equation}\label{e:adog}
\|R_2g\|^2-\|R_1h\|^2=2\bigl(\langle R_1h,h\rangle-\langle R_2g,g\rangle\bigr)
+\bigl(\|h\|^2-\|g\|^2\bigr).
\end{equation}
From \eqref{e:adog} we get that \eqref{e:noh} holds if and only if
\begin{equation*}
\langle (I+R_1)h,h\rangle \geq \langle (I+R_2)g,g\rangle,
\end{equation*}
and using again \eqref{e:iau} we conclude that \eqref{e:pai} is equivalent to
\begin{equation}\label{e:xia}
\langle \xi,(I+R_1)^{-1}\xi\rangle \geq \langle \xi,(I+R_2)^{-1}\xi\rangle.
\end{equation}
Since, in \eqref{e:xia}, $\xi\in\cH$ is arbitrary, we have proven that
$(I+R_1)^{-1}\geq (I+R_2)^{-1}$, hence the direct implication in \eqref{e:aud} is proven. The
converse implication in \eqref{e:aud} follows as well, since all implications from above are
reversible.
\end{proof}

\begin{definition}[\cite{ReedSimon}]\label{d:order} 
Suppose $(R_n)_{n\in\NN}$ and $R$ are selfadjoint operators
on the same Hilbert space $\cH$.
The sequence $(R_n)_{n\in\NN}$ converges in the strong resolvent sense to $R$ if for every 
$\lambda\in\CC\setminus\RR$
\begin{equation*}
(\lambda I-R_n)^{-1}\xi\xrightarrow[n\ra\infty]{ } (I-R)^{-1}\xi,\quad \xi\in\cH.
\end{equation*}
\end{definition}

\begin{lemma}\label{l:tem} With notation as in Lemma~\ref{l:k2},
the mapping $\cT\ni\widetilde T\mapsto C^{-1}(\widetilde T)\in\cS$ defined at \eqref{e:ct} is
sequentially continuous when considering on $\cT$ the norm convergence and on $\cS$ the strong 
resolvent convergence.
\end{lemma}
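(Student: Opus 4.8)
The statement to prove is Lemma~\ref{l:tem}: if $\widetilde T_n\to\widetilde T$ in operator norm in $\cT$, then $C^{-1}(\widetilde T_n)\to C^{-1}(\widetilde T)$ in the strong resolvent sense. The strategy is to bypass the unbounded operators $\widetilde S_n=C^{-1}(\widetilde T_n)$ and $\widetilde S=C^{-1}(\widetilde T)$ entirely, and instead work with their resolvents, which are bounded and in fact expressible directly in terms of the Cayley transforms. The key algebraic fact is that $(I+\widetilde S)^{-1}=\tfrac12(I+\widetilde T)$; this follows from $(I+\widetilde T)(I+\widetilde S)h=2h$ for $h\in\dom(\widetilde S)$ (exactly the computation displayed in the excerpt before Lemma~\ref{l:k1}, now for the selfadjoint extension) together with the fact that $(I+\widetilde S)\dom(\widetilde S)=\cH$ since $\widetilde S$ is positive selfadjoint. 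Hence norm convergence $\widetilde T_n\to\widetilde T$ immediately gives norm convergence $(I+\widetilde S_n)^{-1}\to(I+\widetilde S)^{-1}$.

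The second step is to upgrade from convergence of the single resolvent at the point $\lambda=-1$ to convergence of resolvents at all nonreal $\lambda$. This is a standard fact, but I would include it: for positive selfadjoint operators, strong (indeed norm) convergence of $(I+\widetilde S_n)^{-1}$ to $(I+\widetilde S)^{-1}$ implies strong resolvent convergence. One clean way is to invoke the resolvent identity to write, for $\lambda\in\CC\setminus\RR$,
\begin{equation*}
(\lambda I-\widetilde S_n)^{-1}-(\lambda I-\widetilde S)^{-1}
= (I+\lambda)(\lambda I-\widetilde S_n)^{-1}\bigl[(I+\widetilde S_n)^{-1}-(I+\widetilde S)^{-1}\bigr](I+\lambda)(\lambda I-\widetilde S)^{-1},
\end{equation*}
using that $(\lambda I-\widetilde S)^{-1}=-(I+\lambda)^{-1}\bigl(I-(I+\lambda)(I+\widetilde S)^{-1}\bigr)^{-1}(I+\widetilde S)^{-1}$ type relations tie $(\lambda I-\widetilde S)^{-1}$ to $(I+\widetilde S)^{-1}$ by a fixed holomorphic functional-calculus formula; the factors $(\lambda I-\widetilde S_n)^{-1}$ are uniformly bounded by $1/|\Im\lambda|$, so the middle factor going to $0$ in norm forces the whole difference to $0$ in norm, a fortiori strongly. (Alternatively, one may quote the standard theorem that convergence of resolvents at one nonreal point implies convergence at all of them, e.g.\ from \cite{ReedSimon}.)

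The only point needing care is the first step: establishing $(I+\widetilde S)^{-1}=\tfrac12(I+\widetilde T)$ as an identity of \emph{everywhere-defined bounded} operators. The relation $(I+\widetilde T)(I+\widetilde S)h=2h$ holds on $\dom(\widetilde S)$; to conclude $(I+\widetilde S)^{-1}=\tfrac12(I+\widetilde T)$ on all of $\cH$ I need $\ran(I+\widetilde S)=\cH$, which is precisely the characterization of positive selfadjointness quoted from \cite{Kato} in the proof of Lemma~\ref{l:k2}, and I need $I+\widetilde S$ injective, which follows from positivity. Thus $I+\widetilde S$ is a bijection of $\dom(\widetilde S)$ onto $\cH$ with bounded inverse $\tfrac12(I+\widetilde T)$. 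I expect this bookkeeping to be the main (modest) obstacle; once it is in place, the convergence claim is essentially immediate, and in fact one gets the stronger conclusion that $C^{-1}$ is continuous from norm convergence on $\cT$ to \emph{norm} resolvent convergence on $\cS$, with the stated sequential strong-resolvent continuity as a special case.
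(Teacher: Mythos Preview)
Your argument is correct in outline and shares with the paper the crucial opening identity $(I+\widetilde S)^{-1}=\tfrac12(I+\widetilde T)$, but from there the two proofs diverge. The paper does not manipulate resolvents at general $\lambda$ at all: from the strong convergence $(I+\widetilde S_n)^{-1}\xi\to(I+\widetilde S)^{-1}\xi$ it builds, for each $h\in\dom(\widetilde S)$, the sequence $h_n=(I+\widetilde S_n)^{-1}(I+\widetilde S)h$, checks that $h_n\to h$ and $\widetilde S_n h_n\to\widetilde S h$, and then invokes the graph-convergence criterion for strong resolvent convergence (\cite{ReedSimon}, VIII.26). Your route is more direct and, as you observe, actually yields \emph{norm} resolvent convergence; the paper's route avoids any resolvent algebra but gives only the strong statement. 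Two small caveats on your execution: first, the displayed identity is not quite right---the correct version is
\[
(\lambda I-\widetilde S_n)^{-1}-(\lambda I-\widetilde S)^{-1}
= -\bigl[(1+\lambda)(\lambda I-\widetilde S_n)^{-1}-I\bigr]\bigl[(I+\widetilde S_n)^{-1}-(I+\widetilde S)^{-1}\bigr]\bigl[(1+\lambda)(\lambda I-\widetilde S)^{-1}-I\bigr],
\]
but the outer factors are still uniformly bounded by $1+|1+\lambda|/|\Im\lambda|$, so your conclusion survives unchanged; second, the ``standard theorem'' you offer as an alternative is stated for a \emph{nonreal} base point, whereas $-1$ is real, so to use it you would first have to step from $-1$ to some nearby nonreal $\lambda$ (for instance via a Neumann series valid for $|\lambda+1|<1$) before invoking it.
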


\begin{proof} Suppose $(\widetilde T_n)_{n\in\NN}$ is a sequence of operators in $ \cT$ such that 
$\widetilde T_n\rightarrow \widetilde T\in\cT$ as $n\ra\infty$. Since
\begin{equation*}
(I+C^{-1}(\widetilde T))^{-1}=\frac{1}{2}(I+\widetilde T),
\end{equation*}
it follows that
\begin{equation}\label{e:iap}
(I+C^{-1}(\widetilde T_n))^{-1}\xi \rightarrow (I+C^{-1}(\widetilde T))^{-1}\xi,\quad \xi\in\cH.
\end{equation}
For every $h\in\dom(C^{-1}(\widetilde T))$ let the sequence $(h_n)_{n\in\NN}$, with elements in
$\dom(C^{-1}(\widetilde T))$, be defined by
\begin{equation*}
h_n=(I+C^{-1}(\widetilde T_n))^{-1}(I+C^{-1}(\widetilde T))h,\quad n\in\NN.
\end{equation*} 
By means of \eqref{e:iap} we get
\begin{equation}\label{e:hen}
h_n\xrightarrow[n\ra\infty]{ } (I+C^{-1}(\widetilde T))^{-1}(I+C^{-1}(\widetilde T))h=h,
\end{equation}
and, moreover,
\begin{align}\label{e:cel}
C^{-1}(\widetilde T_n)h_n & =C^{-1}(\widetilde T_n)(I+C^{-1}(\widetilde T_n))^{-1}
(I+C^{-1}(\widetilde T))h\\
& =h+C^{-1}(\widetilde T)h-h_n\xrightarrow[n\ra\infty]{ } C^{-1}(\widetilde T)h.\nonumber
\end{align}
From \eqref{e:hen}, \eqref{e:cel}, and \cite{ReedSimon}, VIII.~26, it follows that the sequence 
$(C^{-1}(\widetilde T_n))_{n\in\NN}$ converges in the strong resolvent sense to 
$C^{-1}(\widetilde T)$. 
\end{proof}

\section{The Main Theorem}

Let $T\colon\dom(T)\ra\cH$  be a symmetric contraction with $\dom(T)$ a closed subspace
of $\cH$ and consider the set, cf.\ \cite{Krein},
\begin{equation*}
\cB(T)=\{\widetilde T\in\cL(\cH)\mid \widetilde{T}\mbox{ is a selfadjoint contraction, } 
\widetilde T|
\dom(T)=T\},
\end{equation*}
where $\cL(\cH)$ denotes the algebra of all bounded linear operators $\cH\ra\cH$.
We present our argument for the fundamental result of M.G.~Kre\u\i n in \cite{Krein}.

\begin{theorem}\label{t:krein}
$\cB(T)\neq\emptyset$ and there exist $\widetilde T_{-1}$ and $\widetilde T_{1}$ in $\cB(T)$,
with $\widetilde T_{-1}\leq \widetilde T_1$, such that, if $B\in\cL(\cH)$ then
\begin{equation*} B\in\cB(T)\mbox{ if and only if }B=B^*\mbox{ and }\widetilde T_{-1}\leq B
\leq \widetilde T_1.
\end{equation*} 
\end{theorem}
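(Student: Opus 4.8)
The plan is to reduce the problem to the two completion lemmas stated in the Preliminaries by writing everything in block-matrix form with respect to the orthogonal decomposition $\cH=\dom(T)\oplus(\cH\ominus\dom(T))$. Denote $\cK=\dom(T)$ and $\cK^\perp=\cH\ominus\cK$. Any selfadjoint contraction $B$ extending $T$ must have the block form
\begin{equation*}
B=\left[\begin{array}{ll} T & X \\ X^* & Y\end{array}\right]\colon\cK\oplus\cK^\perp\ra\cK\oplus\cK^\perp,
\end{equation*}
with $Y=Y^*\in\cL(\cK^\perp)$ (the $(1,1)$-entry is forced to be $T$ by the extension condition, and the $(2,1)$-entry must be the adjoint of the $(1,2)$-entry by selfadjointness). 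So the first step is: $\cB(T)\neq\emptyset$ iff there is a selfadjoint contractive completion of the partially specified block matrix whose $(1,1)$-corner is $T$.

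Next I would invoke Lemma~\ref{l:fn}: since $T$ is itself a contraction on $\cK$, the row $[\,T\quad X\,]$ is a contraction from $\cK\oplus\cK^\perp$ into $\cK$ iff $X=D_{T^*}\Gamma_1$ for some contraction $\Gamma_1\colon\cK^\perp\ra\cD_{T^*}$; symmetrically the column $\left[\begin{smallmatrix}T\\ X^*\end{smallmatrix}\right]$ forces $X^*=\Gamma_2 D_T$ for a contraction $\Gamma_2\colon\cD_T\ra\cK^\perp$, and selfadjointness $X^*=(D_{T^*}\Gamma_1)^*=\Gamma_1^*D_{T^*}$ together with $T=T^*$ (so $D_T=D_{T^*}$, $\cD_T=\cD_{T^*}$) identifies $\Gamma_2=\Gamma_1^*$. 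Then Lemma~\ref{l:ag} describes \emph{all} contractive completions of the full $2\times2$ matrix by a free contraction parameter $\Gamma\colon\cD_{\Gamma_1}\ra\cD_{\Gamma_2^*}$, via
\begin{equation*}
Y=-\Gamma_2 T^*\Gamma_1+D_{\Gamma_2^*}\Gamma D_{\Gamma_1}=-\Gamma_1^*T\Gamma_1+D_{\Gamma_1^*}\Gamma D_{\Gamma_1}.
\end{equation*}
Imposing $Y=Y^*$ on this expression forces $\Gamma=\Gamma^*$ (here $\cD_{\Gamma_2^*}=\cD_{\Gamma_1^*}$, so $\Gamma$ acts on a single space and selfadjointness makes sense), and since a selfadjoint contraction $\Gamma$ always exists (e.g.\ $\Gamma=0$, or $\Gamma=\pm I$), we conclude $\cB(T)\neq\emptyset$. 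Moreover the map $\Gamma\mapsto B(\Gamma)$ is now an order-preserving bijection between selfadjoint contractions $\Gamma$ on $\cD_{\Gamma_1}$ and elements of $\cB(T)$: only the $(2,2)$-entry varies, and it varies as $\Gamma\mapsto D_{\Gamma_1^*}\Gamma D_{\Gamma_1}$ plus a constant, which is monotone in $\Gamma$. Taking $\widetilde T_{-1}:=B(-I)$ and $\widetilde T_{1}:=B(I)$ (the completions corresponding to the extreme selfadjoint contractions $\mp I$) gives the minimal and maximal elements, and $B\in\cB(T)$ iff $B=B(\Gamma)$ for some selfadjoint contraction $\Gamma$, iff $-I\leq\Gamma\leq I$, iff $\widetilde T_{-1}\leq B\leq\widetilde T_1$ — using that the order on the block operators reduces to the order on the $(2,2)$-entries, hence on the $\Gamma$'s.

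The main obstacle I anticipate is the bookkeeping identifying $\Gamma_2$ with $\Gamma_1^*$ and checking that the self-adjointness of $B$ translates \emph{exactly} into the self-adjointness of $\Gamma$ — in particular verifying that the cross term $-\Gamma_1^*T\Gamma_1$ is automatically selfadjoint (it is, since $T=T^*$) so that no constraint beyond $\Gamma=\Gamma^*$ is introduced, and that the ambient defect spaces $\cD_{\Gamma_1}$ and $\cD_{\Gamma_1^*}$ into/from which $\Gamma$ acts are genuinely the same space (again immediate from $T=T^*$). The other point requiring a little care is the reduction of the operator order $\widetilde T_{-1}\leq B\leq\widetilde T_1$ to the scalar-like order $-I\leq\Gamma\leq I$: one must observe that $B(\Gamma_a)\leq B(\Gamma_b)$ is equivalent to $D_{\Gamma_1^*}(\Gamma_b-\Gamma_a)D_{\Gamma_1}\geq0$ as operators on $\cK^\perp$, and that this holds iff $\Gamma_a\leq\Gamma_b$ on $\cD_{\Gamma_1}$, which follows because $D_{\Gamma_1}$ has dense range in $\cD_{\Gamma_1}$ and $\Gamma_b-\Gamma_a$ vanishes on the orthogonal complement. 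Once these identifications are in place, the theorem is just the statement that the interval of selfadjoint contractions $[-I,I]$ has a least and a greatest element, transported through the bijection.
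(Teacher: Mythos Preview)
Your overall strategy matches the paper's: decompose $\cH=\dom(T)\oplus(\cH\ominus\dom(T))$, apply the two completion lemmas, and parametrize $\cB(T)$ by selfadjoint contractions $\Gamma$. Two points need repair.

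First, there is a persistent conflation of $T$ with its compression. The operator $T$ maps $\dom(T)$ into all of $\cH$, not into $\dom(T)$, so it cannot sit in the $(1,1)$-block of a $2\times2$ matrix over $\cK\oplus\cK^\perp$. The paper writes $T=\left[\begin{smallmatrix}A\\\Gamma_2D_A\end{smallmatrix}\right]$ as a \emph{column}, with $A=P_{\dom(T)}T$ the $(1,1)$-entry; the $(2,1)$-entry $\Gamma_2D_A$ is already fixed by the extension condition itself, not merely by selfadjointness. Once you replace your ``$T$'' by $A$ throughout the block calculations (and correct $D_{\Gamma_1^*}$ to $D_{\Gamma_2^*}=D_{\Gamma_1}$ in the $(2,2)$-entry), your parametrization of $\cB(T)$ agrees with the paper's.

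Second, and more seriously, your argument for the converse implication is circular. You write ``$B\in\cB(T)$ iff $B=B(\Gamma)$ \ldots\ iff $\widetilde T_{-1}\leq B\leq\widetilde T_1$'', but the last biconditional only uses that $\Gamma\mapsto B(\Gamma)$ is order-preserving \emph{on its image $\cB(T)$}. To deduce $B\in\cB(T)$ from $B=B^*$ and $\widetilde T_{-1}\leq B\leq\widetilde T_1$ you must first show that such a $B$ has first column equal to $T$, and nothing in your sketch addresses this. The paper fills this gap as follows: the sandwich $-I\leq\widetilde T_{-1}\leq B\leq\widetilde T_1\leq I$ forces $B$ to be a contraction, so Lemma~\ref{l:ag} applies and yields $B$ in the general form with some data $(C,\Delta,\Gamma')$; evaluating the quadratic-form inequalities on $\dom(T)$ gives $C=A$; then $B-\widetilde T_{-1}\geq 0$ and $\widetilde T_1-B\geq0$ are positive block operators with vanishing $(1,1)$-corner, which forces their off-diagonal entries to vanish (since $\left[\begin{smallmatrix}0&M\\M^*&N\end{smallmatrix}\right]\geq0$ implies $M=0$), hence $\Delta=\Gamma_2$. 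Only after this step is $B$ known to be of the form $B(\Gamma')$, and your order argument can close the loop.
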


\begin{proof}
The operator $T$ may be regarded as follows
\begin{equation*}
T\colon \dom(T)\ra \begin{matrix} \dom(T)\\ \oplus \\ \cH\ominus\dom(T)\end{matrix},
\end{equation*}
hence, by means of Lemma~\ref{l:fn}, $T=\left[\begin{matrix} A \\ \Gamma_2 D_A\end{matrix}\right]$
with $A\colon \dom(T)\ra\dom(T)$ a symmetric contraction and $\Gamma_2\colon\cD_A\ra\cH
\ominus \dom(T)$ a contraction, $A$ and $\Gamma_2$ being uniquely determined by $T$.

We search now for $\widetilde T\in\cB(T)$. Since $\widetilde T$ is selfadjoint and 
$\widetilde T|\dom(T)=T$, it must be of the following form
\begin{equation}\label{e:tt}
\widetilde T=\left[\begin{array}{ll} A & D_{A}\Gamma_2^* \\ \Gamma_2 D_A & X
\end{array}\right]
\end{equation}
with $X\colon\cH\ominus\dom(T)\ra\cH\ominus\dom(T)$ selfadjoint. Since $\widetilde T$ must
be a contraction as well, by Lemma~\ref{l:ag} and \eqref{e:tt}, we get
\begin{equation}\label{e:tte}
\widetilde T=\widetilde T(\Gamma)=\left[\begin{array}{ll} A & D_{A}\Gamma_2^* \\ \Gamma_2 D_A & 
-\Gamma_2 A^* \Gamma_2^*+D_{\Gamma_2^*}\Gamma D_{\Gamma_2^*}
\end{array}\right],
\end{equation}
with $\Gamma\colon \cD_{\Gamma_2^*}\ra\cD_{\Gamma_2^*}$ a selfadjoint contraction. 
The existence of at least one $\Gamma$ (for instance, $\Gamma=0$) proves that 
$\cB(T)\neq\emptyset$. If $\cD_{\Gamma_2^*}=\{0\}$ we take 
$\widetilde T_{-1}=\widetilde T_1=\widetilde T(0)$, that is, $\cB(T)$ has a single element.
If $\cD_{\Gamma_2^*}\neq \{0\}$ define
\begin{equation}\label{e:tt1}
\widetilde T_1=\widetilde T(I)=\left[\begin{array}{ll} A & D_{A}\Gamma_2^* \\ \Gamma_2 D_A & 
-\Gamma_2 A^* \Gamma_2^*+D_{\Gamma_2^*}^2
\end{array}\right]
=\left[\begin{array}{ll} A & D_{A}\Gamma_2^* \\ \Gamma_2 D_A & 
I-\Gamma_2 (I+A)^* \Gamma_2^*
\end{array}\right],
\end{equation}
\begin{equation}\label{e:ttm1}
\widetilde T_{-1}=\widetilde T(-I)=\left[\begin{array}{ll} A & D_{A}\Gamma_2^* \\ \Gamma_2 D_A & 
-\Gamma_2 A^* \Gamma_2^*-D_{\Gamma_2^*}^2
\end{array}\right]
=\left[\begin{array}{ll} A & D_{A}\Gamma_2^* \\ \Gamma_2 D_A & 
\Gamma_2(I- A)^* \Gamma_2^*-I
\end{array}\right].
\end{equation}
It is clear that $\widetilde T_{-1}\leq \widetilde T(\Gamma)\leq \widetilde T_1$ for any selfadjoint
contraction $\Gamma\colon\cD_{\Gamma_2^*}\ra\cD_{\Gamma_2^*}$.

Conversely, suppose $B\in\cL(\cH)$ is selfadjoint and
\begin{equation}\label{e:teb}
\widetilde T_{-1}\leq B\leq \widetilde T_1.
\end{equation}
Since $\widetilde T_{-1}$ and $\widetilde T_1$ are contractions it follows that $B$ 
itself is a contraction and, using again Lemma~\ref{l:ag}, it must have the form
\begin{equation*}
B=\left[\begin{array}{ll} C & D_{C}\Delta^* \\ \Delta D_C & 
-\Delta A^* \Delta^*+D_{\Delta^*}\Gamma^\prime D_{\Delta^*}
\end{array}\right],
\end{equation*}
with respect to the decomposition $\cH=\dom(T)\oplus(\cH\ominus\dom(T))$, where 
$C\colon\dom(T)\ra\dom(T)$ is a selfadjoint contraction, $\Delta\colon\cD(C)\ra
\cH\ominus\dom(T)$ is a contraction, and $\Gamma^\prime\colon\cD_{\Delta^*}\ra
\cD_{\Delta^*}$ is a selfadjoint contraction, all uniquely associated to $B$. If one makes use
of \eqref{e:teb} on $\dom(T)$ it follows that $C=A$. Now \eqref{e:teb} can be written as
\begin{equation}\label{e:bet1}
B-\widetilde T_{-1}=
\left[\begin{array}{ll} 0 & D_A(\Delta-\Gamma_2)^* \\
(\Delta-\Gamma_2)D_A & -\Delta A\Delta^* +D_{\Delta^*}\Gamma^\prime 
D_{\Delta^*} +\Gamma_2 A\Gamma_2^*+D_{\Gamma_2^*}^2\end{array}\right]
\geq 0,
\end{equation}
\begin{equation}\label{e:betm1}
\widetilde T_{1}-B=\left[\begin{array}{ll} 0 & D_A(\Gamma_2-\Delta)^* \\
(\Gamma_2-\Delta)D_A & -\Gamma_2 A\Gamma_2^*+D_{\Gamma_2^*}^2
+\Delta A\Delta^* -D_{\Delta^*}\Gamma^\prime 
D_{\Delta^*} \end{array}\right]\geq 0.
\end{equation}

At this point we have to recall that, for a given direct sum decomposition of $\cH$, if one considers
the operator $\left[\begin{array}{ll}  0 & M \\ M^* & N\end{array}\right]$ then
\begin{equation}\label{e:men}
\left[\begin{array}{ll}  0 & M \\ M^* & N\end{array}\right]\geq 0\mbox{ if and only if } M=0
\mbox{ and }N\geq 0.
\end{equation}
If one applies \eqref{e:men} to \eqref{e:bet1} and \eqref{e:betm1} it follows that 
$\Delta=\Gamma_2$ and, since \eqref{e:tte} is proven to be the general form of operators
from $\cB(T)$, we conclude that $B\in\cB(T)$.
\end{proof}

For a given symmetric contraction $T$, with notation as in \eqref{e:tte}, we consider the
set
\begin{equation}\label{e:ce}
\cC(T)=\{\Gamma\in\cL(\cD_{\Gamma_2^*})\mid \Gamma\mbox{ is a selfadjoint contraction}\}.
\end{equation}

\begin{proposition}\label{p:tem} Given $T$ a symmetric contraction, with notation as in \eqref{e:tte},
the mapping $\cC(T)\ni\Gamma\mapsto \widetilde T(\Gamma)\in\cB(T)$ as in \eqref{e:tte} is 
continuous, where $\cC(T)$ and $\cB(T)$ carry the operator
norm topologies, and such that, for any $\Gamma^\prime,\Gamma^{\prime\prime}\in\cC(T)$, 
\begin{equation*}
\Gamma^\prime\leq \Gamma^{\prime\prime}\mbox{ if and only if }\widetilde 
T(\Gamma^\prime)\leq\widetilde T(\Gamma^{\prime\prime}).
\end{equation*}
\end{proposition}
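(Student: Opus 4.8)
The plan is to exploit the explicit formula \eqref{e:tte} for $\widetilde T(\Gamma)$, which depends on $\Gamma$ only through the single entry in the lower-right corner, all other entries being fixed (determined by $A$ and $\Gamma_2$, hence by $T$ alone). Thus $\widetilde T(\Gamma^{\prime\prime})-\widetilde T(\Gamma^\prime)$ is the block-diagonal-free operator whose only nonzero entry is the bottom-right block $D_{\Gamma_2^*}(\Gamma^{\prime\prime}-\Gamma^\prime)D_{\Gamma_2^*}$, viewed as an operator on $\cH=\dom(T)\oplus(\cH\ominus\dom(T))$, i.e.
\begin{equation*}
\widetilde T(\Gamma^{\prime\prime})-\widetilde T(\Gamma^\prime)=\left[\begin{array}{ll} 0 & 0 \\ 0 & D_{\Gamma_2^*}(\Gamma^{\prime\prime}-\Gamma^\prime)D_{\Gamma_2^*}\end{array}\right].
\end{equation*}
Continuity of $\Gamma\mapsto\widetilde T(\Gamma)$ in operator norm is then immediate, since $\|\widetilde T(\Gamma^{\prime\prime})-\widetilde T(\Gamma^\prime)\|\le\|D_{\Gamma_2^*}\|^2\,\|\Gamma^{\prime\prime}-\Gamma^\prime\|\le\|\Gamma^{\prime\prime}-\Gamma^\prime\|$; in fact the map is Lipschitz with constant $\le 1$. (One must also note the degenerate case $\cD_{\Gamma_2^*}=\{0\}$, where $\cC(T)$ is a single point and there is nothing to prove.)

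For the order equivalence, the forward direction is the easy one: if $\Gamma^\prime\le\Gamma^{\prime\prime}$, then $\Gamma^{\prime\prime}-\Gamma^\prime\ge 0$ on $\cD_{\Gamma_2^*}$, so for any $\xi\in\cH$ with component $\xi_2\in\cH\ominus\dom(T)$ we have $\langle(\widetilde T(\Gamma^{\prime\prime})-\widetilde T(\Gamma^\prime))\xi,\xi\rangle=\langle(\Gamma^{\prime\prime}-\Gamma^\prime)D_{\Gamma_2^*}\xi_2,D_{\Gamma_2^*}\xi_2\rangle\ge 0$, giving $\widetilde T(\Gamma^\prime)\le\widetilde T(\Gamma^{\prime\prime})$. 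For the converse, assume $\widetilde T(\Gamma^\prime)\le\widetilde T(\Gamma^{\prime\prime})$, i.e. the displayed block operator above is $\ge 0$. Restricting the quadratic form to vectors of the form $\xi=0\oplus\xi_2$ gives $\langle(\Gamma^{\prime\prime}-\Gamma^\prime)D_{\Gamma_2^*}\xi_2,D_{\Gamma_2^*}\xi_2\rangle\ge 0$ for all $\xi_2$. Since $D_{\Gamma_2^*}$ has dense range in $\cD_{\Gamma_2^*}$ by definition, and the form $\eta\mapsto\langle(\Gamma^{\prime\prime}-\Gamma^\prime)\eta,\eta\rangle$ is continuous, it follows that $\langle(\Gamma^{\prime\prime}-\Gamma^\prime)\eta,\eta\rangle\ge 0$ for all $\eta\in\cD_{\Gamma_2^*}$; as $\Gamma^{\prime\prime}-\Gamma^\prime$ is a selfadjoint operator on $\cD_{\Gamma_2^*}$, this is exactly $\Gamma^\prime\le\Gamma^{\prime\prime}$.

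The only point requiring a little care — and the step I would flag as the main (modest) obstacle — is the passage from positivity of the bottom-right block as an operator on the full space $\cH$ to positivity of $\Gamma^{\prime\prime}-\Gamma^\prime$ as an operator on the smaller space $\cD_{\Gamma_2^*}$. The block $D_{\Gamma_2^*}(\Gamma^{\prime\prime}-\Gamma^\prime)D_{\Gamma_2^*}$ is a priori an operator on $\cH\ominus\dom(T)$ (extended by $0$ off $\cD_{\Gamma_2^*}$), so testing against all $\xi_2\in\cH\ominus\dom(T)$ only directly sees its behaviour on $\ran D_{\Gamma_2^*}$; the closure/density argument above is what bridges this gap, together with the observation that on the orthogonal complement $(\cH\ominus\dom(T))\ominus\cD_{\Gamma_2^*}$ the operator $D_{\Gamma_2^*}$ vanishes, so nothing is lost. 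Once this is phrased correctly the proof is a few lines. I would also remark that the same computation shows the map is affine in $\Gamma$, which explains a posteriori why $\widetilde T_{-1}=\widetilde T(-I)$ and $\widetilde T_1=\widetilde T(I)$ are the extreme points found in Theorem~\ref{t:krein}.
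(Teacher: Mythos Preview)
Your proof is correct and follows essentially the same route as the paper: both compute the difference $\widetilde T(\Gamma^{\prime\prime})-\widetilde T(\Gamma^\prime)$ as the block matrix with single nonzero entry $D_{\Gamma_2^*}(\Gamma^{\prime\prime}-\Gamma^\prime)D_{\Gamma_2^*}$, and read off continuity and the order equivalence from this formula. The paper's version is terser (it invokes \eqref{e:men} and leaves implicit the density-of-$\ran D_{\Gamma_2^*}$ step you spell out), but the argument is the same.
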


\begin{proof} If $\Gamma^\prime,\Gamma^{\prime\prime}\in\cC(T)$ then
\begin{equation}\label{e:tep}
\widetilde T(\Gamma^\prime)-\widetilde T(\Gamma^{\prime\prime})=
\left[\begin{array}{ll} 0 & 0 \\ 0 & D_{\Gamma_2^*}(\Gamma^\prime
-\Gamma^{\prime\prime})D_{\Gamma_2^*}\end{array}\right],
\end{equation}
and, considering the natural order relation for bounded selfadjoint operators, by \eqref{e:men}  it
now follows that  $\Gamma^\prime\leq \Gamma^{\prime\prime}$ if and only if $\widetilde 
T(\Gamma^\prime)\leq\widetilde T(\Gamma^{\prime\prime})$.

From \eqref{e:tep} we get
\begin{equation}\label{e:net}
\|\widetilde T(\Gamma^\prime)-\widetilde T(\Gamma^{\prime\prime})\|
=\|D_{\Gamma_2^*}(\Gamma^\prime-\Gamma^{\prime\prime})D_{\Gamma_2^*}\|,
\end{equation}
hence the continuity of the
mapping  $\cC(T)\ni\Gamma\mapsto \widetilde T(\Gamma)\in\cB(T)$ is clear.
\end{proof}

Consider now a positive, densely defined, and closed
operator $S$ in $\cH$ and let $T=C(S)$ defined
as in \eqref{e:cs}. We associate to $S$ the sets $\cS$ and $\cT$ as in Lemma~\ref{l:k2} and
clearly $\cT=\cB(T)$. By means of Lemma~\ref{l:k2} and the proof of Theorem~\ref{t:krein}, we
have obtained a bijective mapping $\cC(T)\ni\Gamma\mapsto C^{-1}(\widetilde T(\Gamma))\in \cS$.

On $\cC(T)$ we consider the natural order relation for bounded selfadjoint operators and the norm
topology and on $\cS$ we consider the order relation as in Definition~\ref{d:order} and the
strong resolvent convergence. From what we
have proven until now and the considerations from the preceding section we get

\begin{theorem}\label{t:bec} Given a positive, densely defined, and closed operator $S$ in $\cH$,
with notation as before, the bijective mapping 
$\cC(T)\ni\Gamma\mapsto C^{-1}(\widetilde T(\Gamma))=\widetilde S(\Gamma)\in \cS$ 
is sequentially continuous and
non-increasing, that is, for any $\Gamma^\prime,\Gamma^{\prime\prime}\in\cC(T)$ we
have
\begin{equation*}
\Gamma^\prime\leq\Gamma^{\prime\prime}\mbox{ if and only if }\widetilde S(\Gamma^\prime)\geq
\widetilde S(\Gamma^{\prime\prime}).
\end{equation*}
Moreover, there exist two positive selfadjoint extensions of $S$, $\widetilde S_\mathrm{K}=
\widetilde S(I)\leq \widetilde S(-I)=\widetilde S_\mathrm{F}$ such that, a positive selfadjoint operator
$R$ on $\cH$ belongs to $\cS$ if and only if 
$\widetilde S_\mathrm{K}\leq R\leq \widetilde S_\mathrm{F}$.
\end{theorem}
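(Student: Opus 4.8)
The plan is to obtain the theorem purely by assembling results already in place: the \Kr bijection between $\cS$ and $\cT$ (Lemma~\ref{l:k2}), the parametrization of $\cB(T)$ by $\cC(T)$ implicit in the proof of Theorem~\ref{t:krein}, the norm-continuity and order-preservation of $\Gamma\mapsto\widetilde T(\Gamma)$ (Proposition~\ref{p:tem}), and the two properties of the inverse Cayley transform, namely its order-reversing character (Lemma~\ref{l:ord}) and its sequential continuity from norm convergence to strong resolvent convergence (Lemma~\ref{l:tem}). No fresh computation will be needed; the work is in composing the maps and keeping track of the reversal of order.

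First I would record bijectivity and continuity. Since $T=C(S)$ we have $\cT=\cB(T)$, and the proof of Theorem~\ref{t:krein} exhibits $\cC(T)\ni\Gamma\mapsto\widetilde T(\Gamma)\in\cB(T)$ as a bijection; composing it with the bijection $C^{-1}\colon\cT\to\cS$ of Lemma~\ref{l:k2} yields the claimed bijection $\Gamma\mapsto C^{-1}(\widetilde T(\Gamma))=\widetilde S(\Gamma)$, each $\widetilde S(\Gamma)$ lying in $\cS$ and hence being a positive selfadjoint extension of $S$. If $\Gamma_n\to\Gamma$ in the operator norm on $\cC(T)$, then $\widetilde T(\Gamma_n)\to\widetilde T(\Gamma)$ in norm by Proposition~\ref{p:tem}, hence $C^{-1}(\widetilde T(\Gamma_n))\to C^{-1}(\widetilde T(\Gamma))$ in the strong resolvent sense by Lemma~\ref{l:tem}; this is precisely sequential continuity of $\Gamma\mapsto\widetilde S(\Gamma)$.

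Next come the monotonicity and the extremal extensions. Proposition~\ref{p:tem} gives $\Gamma^\prime\leq\Gamma^{\prime\prime}$ iff $\widetilde T(\Gamma^\prime)\leq\widetilde T(\Gamma^{\prime\prime})$, while Lemma~\ref{l:ord}, applied to the positive selfadjoint operators $\widetilde S(\Gamma^\prime)$ and $\widetilde S(\Gamma^{\prime\prime})$, whose Cayley transforms are $\widetilde T(\Gamma^\prime)$ and $\widetilde T(\Gamma^{\prime\prime})$, gives $\widetilde T(\Gamma^\prime)\leq\widetilde T(\Gamma^{\prime\prime})$ iff $\widetilde S(\Gamma^\prime)\geq\widetilde S(\Gamma^{\prime\prime})$; chaining these equivalences produces the non-increasing assertion. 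I would then set $\widetilde S_\mathrm{K}=\widetilde S(I)=C^{-1}(\widetilde T_1)$ and $\widetilde S_\mathrm{F}=\widetilde S(-I)=C^{-1}(\widetilde T_{-1})$ (when $\cD_{\Gamma_2^*}=\{0\}$ these coincide and the statement is trivial). Since $-I\leq\Gamma\leq I$ for every $\Gamma\in\cC(T)$ and every $R\in\cS$ equals $\widetilde S(\Gamma)$ for some such $\Gamma$, the order-reversal yields $\widetilde S_\mathrm{K}\leq R\leq\widetilde S_\mathrm{F}$; in particular $\widetilde S_\mathrm{K}\leq\widetilde S_\mathrm{F}$. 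Conversely, if $R$ is positive selfadjoint with $\widetilde S_\mathrm{K}\leq R\leq\widetilde S_\mathrm{F}$, then Lemma~\ref{l:ord} gives $\widetilde T_{-1}=C(\widetilde S_\mathrm{F})\leq C(R)\leq C(\widetilde S_\mathrm{K})=\widetilde T_1$, so $C(R)\in\cB(T)$ by Theorem~\ref{t:krein}; thus $C(R)$ is a selfadjoint contraction extending $T$, i.e.\ $C(R)\in\cT$, whence $R=C^{-1}(C(R))\in\cS$ by Lemma~\ref{l:k2}.

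I do not expect a genuine obstacle here: the hard analysis has already been spent on Theorem~\ref{t:krein} and on Lemmas~\ref{l:ord} and~\ref{l:tem}, and what remains is organisational. The one point requiring care is consistent bookkeeping of the fact that the Cayley transform \emph{reverses} order, so that the largest parameter $\Gamma=I$ produces the \emph{smallest} extension $\widetilde S_\mathrm{K}$ (the \Kr, or ``hard'', extension) and $\Gamma=-I$ produces the largest one $\widetilde S_\mathrm{F}$ (the Friedrichs extension); and one should dispatch the degenerate case $\cD_{\Gamma_2^*}=\{0\}$ separately, exactly as in the proof of Theorem~\ref{t:krein}.
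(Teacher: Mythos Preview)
Your proposal is correct and matches the paper's approach exactly: the paper presents Theorem~\ref{t:bec} without a separate proof, deriving it ``from what we have proven until now and the considerations from the preceding section,'' i.e.\ by composing the bijection of Lemma~\ref{l:k2} with the parametrization from Theorem~\ref{t:krein}, and then invoking Proposition~\ref{p:tem}, Lemma~\ref{l:ord}, and Lemma~\ref{l:tem} for the order and continuity statements. Your write-up makes this assembly explicit (including the converse via Theorem~\ref{t:krein} and the degenerate case $\cD_{\Gamma_2^*}=\{0\}$), which is a faithful unpacking of what the paper leaves to the reader.
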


Consider $\Gamma\in\cC(T)$. Then $\dom(\widetilde S(\Gamma))=\ran(I+\widetilde T(\Gamma))$.
Since $\widetilde T(\Gamma)$ as in \eqref{e:tte} is defined as a $2\times 2$ block matrix 
corresponding to the decomposition $\cH=\ran(I+S)\oplus \ker(I+S^*)$ and $\dom(S)=\ran(I+T)$,
we get
\begin{equation}\label{e:domes}
\dom(\widetilde S(\Gamma))=\dom(S)+\left[\begin{array}{l} D_A\Gamma_2^* \\ I-\Gamma_2 A
\Gamma_2^*+D_{\Gamma_2^*}\Gamma D_{\Gamma_2}^*\end{array}\right]\ker(I+S^*),
\end{equation}
and for the moment this is all we can say about $\dom(\widetilde S(\Gamma))$. The next section
will improve the above formula, see Proposition~\ref{p:domg}.

\section{Special Semibounded Selfadjoint Extensions}

We have obtained in Theorem~\ref{t:krein} two remarkable positive selfadjoint extensions of $S$, 
$\widetilde S_{\mathrm{K}}=\widetilde S(I)$ and $\widetilde S_{\mathrm{F}}=\widetilde S(-I)$.
According to the general theory, \cite{AlonsoSimon}, \cite{Kato}, $\widetilde S_\mathrm{F}$ must
be the Friedrichs extension. $\widetilde S_\mathrm{K}$ was called in \cite{AlonsoSimon} the 
Kre\u\i n extension.

Let us denote by $F$ the Friedrichs extension of a positive, densely defined, closed operator $S$
in $\cH$. Then, \cite{Kato}, \cite{ReedSimon},
\begin{align}\label{e:domef}
\dom(F)=\{\xi\in\dom(S^*)\mid & \mbox{ there exists }(\xi_n)_{n\in\NN}\mbox{ in }\dom(S),\ 
\xi_n\xrightarrow[n\ra\infty]{\|\cdot\| }\xi\\
& \mbox{ and }\langle(\xi_n-\xi_m),S(\xi_n-\xi_m)\rangle\xrightarrow[m,n\ra\infty]{ } 0\},\nonumber
\end{align}
and
\begin{equation}\label{e:fe}
F\xi=S^*\xi,\quad \xi\in\dom(F).
\end{equation}
As before, we consider
\begin{equation}\label{e:cest}
C(S)=T=\left[\begin{matrix} A \\ \Gamma_2 D_A\end{matrix}\right]\colon\dom(T)\ra \begin{matrix}
\dom(T) \\ \oplus \\ \cH\ominus\dom(T),
\end{matrix}
\end{equation}
where $A\colon\dom(T)\ra\dom(T)$ is a symmetric contraction and 
$\Gamma_2\colon\cD_A\ra\cH\ominus\dom(T)$ is a contraction, uniquely determined by 
$T$ and hence by $S$.

\begin{lemma}\label{l:rana} 
$\ran(I+A)\subseteq \ran(I+T)^*\subseteq \ran((I+A)^{1/2})$, where the identity operator 
$I\colon\dom(T)\ra\dom(T)$ is identified with the embedding operator
$\colon\dom(T)\ra\dom(T)\subseteq \cH=\dom(T)\oplus(\cH\ominus\dom(T))$. 
\end{lemma}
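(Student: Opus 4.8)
The plan is to reduce the statement to the block-matrix form of $T$ recorded in \eqref{e:cest} together with elementary functional calculus for $A$. First I would observe that, since $\dom(T)$ is closed, it is itself a Hilbert space and $A\colon\dom(T)\ra\dom(T)$, being bounded and symmetric, is in fact selfadjoint, with $-I\leq A\leq I$. Hence $D_A=(I-A^*A)^{1/2}=(I-A^2)^{1/2}$, and since $1-t^2=(1-t)(1+t)$ with both factors nonnegative on $[-1,1]$, the spectral theorem gives
\[
D_A=(I-A)^{1/2}(I+A)^{1/2}=(I+A)^{1/2}(I-A)^{1/2}.
\]
In particular $\ran(D_A)\subseteq\ran((I+A)^{1/2})$, and trivially $\ran(I+A)\subseteq\ran((I+A)^{1/2})$.

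Next I would compute $(I+T)^*$ explicitly. Writing $I$ for the embedding $\dom(T)\hra\cH=\dom(T)\oplus(\cH\ominus\dom(T))$, formula \eqref{e:cest} reads $I+T=\left[\begin{smallmatrix} I+A\\ \Gamma_2D_A\end{smallmatrix}\right]\colon\dom(T)\ra\cH$; taking adjoints and using $A^*=A$, $D_A^*=D_A$ yields $(I+T)^*=[\,I+A\ \ D_A\Gamma_2^*\,]\colon\cH\ra\dom(T)$. Therefore
\[
\ran\bigl((I+T)^*\bigr)=\ran(I+A)+D_A\Gamma_2^*\bigl(\cH\ominus\dom(T)\bigr).
\]

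From this both inclusions follow at once. The first, $\ran(I+A)\subseteq\ran((I+T)^*)$, holds because the right-hand side above is a sum of subspaces one of which is $\ran(I+A)$. For the second, note that $\ran(I+A)\subseteq\ran((I+A)^{1/2})$ and that $D_A\Gamma_2^*(\cH\ominus\dom(T))\subseteq\ran(D_A)\subseteq\ran((I+A)^{1/2})$; since $\ran((I+A)^{1/2})$ is a linear subspace it contains their sum, so $\ran((I+T)^*)\subseteq\ran((I+A)^{1/2})$.

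I do not anticipate a genuine obstacle. The only steps deserving care are the factorization $D_A=(I+A)^{1/2}(I-A)^{1/2}$ — which is exactly where one uses that $A$ is a true selfadjoint operator on $\dom(T)$, not merely symmetric — and keeping straight which Hilbert space each block of the decomposition $\cH=\dom(T)\oplus(\cH\ominus\dom(T))$ acts on when forming the adjoint $(I+T)^*$.
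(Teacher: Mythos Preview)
Your argument is correct. The first inclusion and the computation of $(I+T)^*$ match the paper exactly. For the second inclusion, however, you take a different route: you factor $D_A=(I+A)^{1/2}(I-A)^{1/2}$ via the spectral theorem and read off $\ran(D_A)\subseteq\ran((I+A)^{1/2})$ directly, so that both summands of $\ran((I+T)^*)=\ran(I+A)+\ran(D_A\Gamma_2^*)$ land inside $\ran((I+A)^{1/2})$. The paper instead bounds $(I+T)^*(I+T)=(I+A)^2+D_A\Gamma_2^*\Gamma_2D_A\leq (I+A)^2+D_A^2=2(I+A)$ and then invokes Douglas's range inclusion theorem \cite{Douglas}. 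Your approach is slightly more elementary in that it avoids the external reference, at the cost of using the commuting square-root factorization; the paper's approach is more robust in that it would still work if one only knew $D_A^2\leq c(I+A)$ without an explicit factorization.
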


\begin{proof} From \eqref{e:cest} it follows
\begin{equation}\label{e:iate}
I+T=\left[\begin{matrix} I+A \\ \Gamma_2 D_A \end{matrix}\right],\quad (I+T)^*=\left[I+A\ \ \ 
D_A\Gamma_2^*\right],
\end{equation}
therefore, $\ran(I+A)\subseteq \ran(I+T)^*$ and, one the other hand,
\begin{align*} (I+T)^*(I+T) & =(I+A)^2+D_A\Gamma_2^*\Gamma_2 D_A \\
& \leq (I+A)^2+D_A^2=(I+A)^2+(I-A^2)\\
& =2(I+A)=2(I+A)^{1/2}(I+A)^{1/2}.
\end{align*}
Making use of Theorem~1 in \cite{Douglas} we get $\ran(I+T)^*\subseteq \ran(I+A)^{1/2}$.
\end{proof}
\begin{lemma}\label{l:fex}
Suppose $\xi\in\dom(S^*)$. Then $\xi\in\dom(F)$ if and only if there exists a sequence 
$(\eta_n)_{n\in\NN}$ of vectors in $\dom(T)=\ran(I+S)$ such that
\begin{equation*}
(I+T)\eta_n\xrightarrow[n\ra\infty]{\|\cdot\|}\xi\mbox{ and }(I+A)^{1/2}(\eta_n-\eta_m)
\xrightarrow[m,n\ra\infty]{\|\cdot\|} 0.
\end{equation*}
\end{lemma}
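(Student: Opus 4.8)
The plan is to transport the form–closure description \eqref{e:domef} of $\dom(F)$ through the Cayley transform. Recall from Section~2 that $I+S$ maps $\dom(S)$ bijectively onto $\ran(I+S)=\dom(T)$ and that $(I+T)(I+S)h=2h$ for every $h\in\dom(S)$. Hence the assignments $\xi_n=(I+T)\eta_n$ and $\eta_n=\tfrac{1}{2}(I+S)\xi_n$ are mutually inverse and set up a bijection between sequences $(\xi_n)$ in $\dom(S)$ and sequences $(\eta_n)$ in $\dom(T)$, under which $\xi_n\to\xi$ in norm corresponds exactly to $(I+T)\eta_n\to\xi$. So the lemma reduces to matching the two Cauchy-type conditions.

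The one computation that makes this work is the identity
\[
\|(I+A)^{1/2}\eta\|^2=\tfrac{1}{2}\bigl(\|\zeta\|^2+\langle S\zeta,\zeta\rangle\bigr),\qquad \zeta\in\dom(S),\quad \eta=\tfrac{1}{2}(I+S)\zeta.
\]
To prove it I would first read off from the block form \eqref{e:cest} that $A$ is the compression $P_{\dom(T)}T|_{\dom(T)}$ of the symmetric contraction $T$ to $\dom(T)$, so that for $\eta\in\dom(T)$ one has $\langle(I+A)\eta,\eta\rangle=\|\eta\|^2+\langle T\eta,\eta\rangle$. Since $T\eta=\tfrac{1}{2}T(I+S)\zeta=\tfrac{1}{2}(I-S)\zeta$, expanding $\|\eta\|^2=\tfrac{1}{4}\|(I+S)\zeta\|^2$ and $\langle T\eta,\eta\rangle=\tfrac{1}{4}\langle(I-S)\zeta,(I+S)\zeta\rangle$ and using the symmetry of $S$, the terms $\|S\zeta\|^2$ cancel between the two and the sum collapses to $\tfrac{1}{2}(\|\zeta\|^2+\langle S\zeta,\zeta\rangle)$. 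Taking $\zeta=\xi_n-\xi_m$ gives $\|(I+A)^{1/2}(\eta_n-\eta_m)\|^2=\tfrac{1}{2}\bigl(\|\xi_n-\xi_m\|^2+\langle S(\xi_n-\xi_m),\xi_n-\xi_m\rangle\bigr)$.

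With this identity in hand, both implications are routine. For the forward direction, given $\xi\in\dom(F)$ take $(\xi_n)$ in $\dom(S)$ as in \eqref{e:domef} and set $\eta_n=\tfrac{1}{2}(I+S)\xi_n\in\dom(T)$; then $(I+T)\eta_n=\xi_n\to\xi$, and since $S$ is positive both terms on the right of the identity are nonnegative, so $\xi_n\to\xi$ together with $\langle S(\xi_n-\xi_m),\xi_n-\xi_m\rangle\to0$ forces $(I+A)^{1/2}(\eta_n-\eta_m)\to0$. For the converse, given $(\eta_n)$ in $\dom(T)=\ran(I+S)$ with $(I+T)\eta_n\to\xi$ and $(I+A)^{1/2}(\eta_n-\eta_m)\to0$, write $\eta_n=(I+S)h_n$ with $h_n\in\dom(S)$ and put $\xi_n=(I+T)\eta_n=2h_n\in\dom(S)$; then $\xi_n\to\xi$ and, from the identity, $\langle S(\xi_n-\xi_m),\xi_n-\xi_m\rangle\le 2\|(I+A)^{1/2}(\eta_n-\eta_m)\|^2\to0$, so $\xi\in\dom(F)$ by \eqref{e:domef} (using the standing hypothesis $\xi\in\dom(S^*)$).

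The only step that needs genuine care --- the ``main obstacle'', such as it is --- is the bookkeeping in the displayed identity: correctly identifying $A$ with the compression $P_{\dom(T)}T|_{\dom(T)}$ and tracking the factor $\tfrac{1}{2}$ that propagates from $(I+T)(I+S)=2I$. Everything else is a direct dictionary translation through the Cayley transform, requiring no further analytic input.
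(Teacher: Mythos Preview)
Your proof is correct and is essentially the same argument as the paper's: both rest on identifying $A$ as the compression of $T$ to $\dom(T)$ and on the resulting identity $\|(I+A)^{1/2}\eta\|^2=\langle(I+T)\eta,\eta\rangle$, which is then matched against the Friedrichs form condition. The only cosmetic difference is that you substitute $\eta=\tfrac12(I+S)\zeta$ and record the identity on the $S$-side as $\tfrac12(\|\zeta\|^2+\langle S\zeta,\zeta\rangle)$, whereas the paper stays on the $T$-side and writes $\langle(I+T)\eta,\eta\rangle$ as half the sum of $\|(I+T)\eta\|^2$ and $\langle(I+T)\eta,(I-T)\eta\rangle$; since $(I+T)\eta=\zeta$ and $(I-T)\eta=S\zeta$, these are literally the same computation.
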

\begin{proof} By means of \eqref{e:domef} and using
$\dom(S)=\ran(I+T)$ and $\dom(T)=\ran(I+S)$, an arbitrary 
vector $\xi\in\dom(S^*)$ belongs to $\dom(F)$ if and only if there
exists a sequence $(\eta_n)_{n\in\NN}$ of vectors in $\ran(I+S)$ such that
\begin{equation}\label{e:iat}
(I+T)\eta_n\xrightarrow[n\ra\infty]{\|\cdot\|}\xi,
\end{equation}
and
\begin{equation}\label{e:piat}
\langle(I+T)(\eta_n-\eta_m),(I-T)(\eta_n-\eta_m)\rangle\xrightarrow[m,n\ra\infty]{ } 0.
\end{equation}
Let us observe that \eqref{e:iat} yields
\begin{equation}\label{e:piata}
\langle(I+T)(\eta_n-\eta_m),(I+T)(\eta_n-\eta_m)\rangle\xrightarrow[m,n\ra\infty]{ } 0.
\end{equation}

Finally, letting $P$ denote the orthogonal projection of $\cH$ onto $\dom(T)$,
\begin{align*}
\|(I+A)^{1/2}(\eta_n-\eta_m)\|^2 & = \langle (I+A)(\eta_n-\eta_m),(\eta_n-\eta_m)\rangle\\
& = \langle P(I+T)(\eta_n-\eta_m),(\eta_n-\eta_m)\rangle\\
& = \langle (I+T)(\eta_n-\eta_m),
P(\eta_n-\eta_m)\rangle \\
& = \langle (I+T)(\eta_n-\eta_m),(\eta_n-\eta_m)\rangle \xrightarrow[m,n\ra\infty]{ }0,
\end{align*}
where, the convergence is obained 
by adding the quantities in \eqref{e:piat} and \eqref{e:piata}.
We have proven that $(I+A)^{1/2}(\eta_n-\eta_m)
\xrightarrow[m,n\ra\infty]{\|\cdot\|} 0$.
\end{proof}

\begin{proposition}\label{p:sef} $\widetilde S_\mathrm{F}=F$.
\end{proposition}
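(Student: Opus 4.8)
The plan is to deduce the equality from a single operator inclusion, using the maximality of selfadjoint operators among symmetric ones. Both $\widetilde S_\mathrm{F}$ and $F$ are selfadjoint extensions of the symmetric operator $S$, hence both are restrictions of $S^*$ (by \eqref{e:fe} for $F$, and because $S\subseteq\widetilde S_\mathrm{F}=\widetilde S_\mathrm{F}^*\subseteq S^*$); in particular, wherever both are defined they agree. So, once we know the domain inclusion $\dom(\widetilde S_\mathrm{F})\subseteq\dom(F)$, it follows that $\widetilde S_\mathrm{F}\subseteq F$, whence $F=F^*\subseteq\widetilde S_\mathrm{F}^*=\widetilde S_\mathrm{F}$, i.e.\ $\widetilde S_\mathrm{F}=F$. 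Everything thus reduces to verifying $\dom(\widetilde S_\mathrm{F})\subseteq\dom(F)$, and the tool is Lemma~\ref{l:fex}: given $\xi\in\dom(\widetilde S_\mathrm{F})$ the requirement $\xi\in\dom(S^*)$ holds for free (since $\widetilde S_\mathrm{F}$ extends $S$), so we only need to produce a sequence $(\eta_n)_{n\in\NN}$ in $\dom(T)$ with $(I+T)\eta_n\to\xi$ and $(I+A)^{1/2}(\eta_n-\eta_m)\to0$.

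The bridge between $\widetilde T_{-1}$ and this approximation is the splitting $D_A=(I+A)^{1/2}(I-A)^{1/2}$. Using it together with \eqref{e:iate}, I would write $I+T=V(I+A)^{1/2}$, where $V\in\cL(\dom(T),\cH)$ is
\begin{equation*}
V\zeta=\left[\begin{array}{c}(I+A)^{1/2}\zeta\\ \Gamma_2(I-A)^{1/2}\zeta\end{array}\right],\qquad \zeta\in\dom(T),
\end{equation*}
relative to $\cH=\dom(T)\oplus(\cH\ominus\dom(T))$. A direct computation from the second block form of $\widetilde T_{-1}$ in \eqref{e:ttm1} (again using $D_A=(I+A)^{1/2}(I-A)^{1/2}$ and $A^*=A$) then shows that $I+\widetilde T_{-1}$, applied to a vector with components $x\in\dom(T)$ and $y\in\cH\ominus\dom(T)$, equals $V\bigl((I+A)^{1/2}x+(I-A)^{1/2}\Gamma_2^*y\bigr)$; in particular $\dom(\widetilde S_\mathrm{F})=\ran(I+\widetilde T_{-1})\subseteq\ran(V)$.

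Next I would note that $I+T$ is injective on $\dom(T)$, because $(I+T)(I+S)h=2h$ for $h\in\dom(S)$, and deduce that $I+A$ is injective on $\dom(T)$: if $A\eta=-\eta$ then $D_A\eta=0$, hence $(I+T)\eta=0$ and $\eta=0$. Since $I+A$ is selfadjoint and injective on $\dom(T)$, $\ran((I+A)^{1/2})$ is dense in $\dom(T)$. Now take $\xi\in\dom(\widetilde S_\mathrm{F})$; by the previous step $\xi=V\zeta$ for some $\zeta\in\dom(T)$. Choose $\zeta_n\in\ran((I+A)^{1/2})$ with $\zeta_n\to\zeta$ and let $\eta_n\in\dom(T)$ be the unique vector with $(I+A)^{1/2}\eta_n=\zeta_n$. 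Then $(I+T)\eta_n=V\zeta_n\to V\zeta=\xi$, because $V$ is bounded, and $(I+A)^{1/2}(\eta_n-\eta_m)=\zeta_n-\zeta_m\to0$. Lemma~\ref{l:fex} now yields $\xi\in\dom(F)$, completing the inclusion and hence the proof.

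The step I expect to require the most care is establishing $\ran(I+\widetilde T_{-1})\subseteq\ran(V)$, that is, checking that the lower block of $(I+\widetilde T_{-1})$ acting on $(x,y)$ really is $\Gamma_2(I-A)^{1/2}$ applied to the very same vector $(I+A)^{1/2}x+(I-A)^{1/2}\Gamma_2^*y$ that produces the upper block. This is exactly where the factorization $D_A=(I+A)^{1/2}(I-A)^{1/2}$, and the commutation of these factors with $I\pm A$ (using $A=A^*$), is essential: it is what allows one to peel off the half-power $(I+A)^{1/2}$ demanded by the Cauchy condition in Lemma~\ref{l:fex}. By comparison, the injectivity observations and the final approximation are routine, and the selfadjointness reduction is what lets us avoid any explicit description of $\dom(S^*)$; the opposite inclusion $\dom(F)\subseteq\dom(\widetilde S_\mathrm{F})$ could in principle be obtained by locating $\dom(S^*)$ via Lemma~\ref{l:rana}, but that route is more delicate and is not needed.
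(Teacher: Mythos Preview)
Your argument is correct and follows essentially the same route as the paper: reduce to the domain inclusion $\dom(\widetilde S_\mathrm{F})\subseteq\dom(F)$ by maximality of selfadjoint operators, then verify the hypotheses of Lemma~\ref{l:fex} by approximating in $\ran((I+A)^{1/2})$, whose density comes from the injectivity of $I+T$. The only difference is packaging: you introduce the bounded operator $V$ and the factorizations $I+T=V(I+A)^{1/2}$ and $I+\widetilde T_{-1}=VV^*$ explicitly, whereas the paper carries out the equivalent computation componentwise via Lemma~\ref{l:rana} and the target vector $(I+A)^{-1/2}(I+T)^*\eta$ (which is exactly your $\zeta$).
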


\begin{proof} Since both operators $\widetilde S_\mathrm{F}$ and $F$ are selfadjoint extensions 
of $S$, hence maximal symmetric, it is sufficient to prove 
$\dom(\widetilde S_\mathrm{F})\subseteq\dom(F)$.

To this end, let $\xi\in\dom(\widetilde S_\mathrm{F})=\ran(I+\widetilde T(-I))$. There exists 
$\eta\in\cH$ such that $\xi=(I+\widetilde T(-I))\eta$ and, since $I+T$ is one-to-one it follows that
$\ran(I+T)^*$ is dense in $\dom(T)$, hence, by Lemma~\ref{l:rana}, $\ran((I+A)^{1/2})$ is
dense in $\dom(T)$. Again by Lemma~\ref{l:rana}, there exists a sequence $(\eta_n)_{n\in\NN}$
of vectors in $\dom(T)$ such that
\begin{equation}\label{e:iaf}
(I+A)^{1/2}\eta_n\xrightarrow[n\ra\infty]{\|\cdot\|} (I+A)^{-1/2}(I+T)^*\eta,
\end{equation}
hence
\begin{equation}\label{e:iafa}
(I+A)\eta_n\xrightarrow[n\ra\infty]{\|\cdot\|} (I+T)^*\eta.
\end{equation}
Applying the bounded operator $(I-A)^{1/2}$ to \eqref{e:iaf} we get
\begin{equation*}
D_A\eta_n=(I-A)^{1/2}(I+A)^{-1/2}\eta_n\xrightarrow[n\ra\infty]{\|\cdot\|}
(I-A)^{1/2}(I+A)^{-1/2}(I+T)^*\eta,
\end{equation*}
and, since, considering $P$ the orthogonal
projection of $\cH$ onto ${\dom(T)}$ and using \eqref{e:iate}, we have
\begin{align*} 
(I-A)^{1/2}(I+A)^{-1/2}(I+T)^*\eta & = (I-A)^{1/2}(I+A)^{-1/2}
\bigl((I+A)P\eta+(I-A^2)^{1/2}\Gamma_2^*(I-P)\eta\bigr) \\
& = (I-A)^{1/2}(I+A)^{1/2}P\eta+(I-A)\Gamma_2^*(I-P)\eta \\
& = D_AP_\eta+(I-A)\Gamma_2^*(I-P)\eta,
\end{align*}
it follows that
\begin{equation}\label{e:gad}
\Gamma_2 D_A\eta_n\xrightarrow[n\ra\infty]{ } \Gamma_2 D_AP\eta+\Gamma_2(I-A)
\Gamma_2^*(I-P)\eta.
\end{equation}
From \eqref{e:iafa} and \eqref{e:gad} we have
\begin{align}\label{e:iaten}
(I+T)\eta_n=\left[\begin{matrix} (I+A)\eta_n \\ \Gamma_2 D_A\eta_n\end{matrix}\right]
\xrightarrow[n\ra\infty]{ } & \left[\begin{matrix} (I+A)P\eta+D_A\Gamma_2^* (I-P)\eta \\
\Gamma_2 D_A P\eta + \Gamma_2(I-A)\Gamma_2^* (I-P)\eta\end{matrix}\right] \\
& = (I+\widetilde T(-I))\eta=\xi. \nonumber
\end{align}
Finally, from \eqref{e:iaf}, \eqref{e:iaten}, and Lemma~\ref{l:fex} we obtain $\xi\in\dom(F)$.
\end{proof}

As a consequence we can determine the domain of an arbitrary positive selfadjoint extension
$\widetilde S(\Gamma)$ in terms of the domain of the Friedrichs extension and the parameter 
$\Gamma$.

\begin{proposition}\label{p:domg}
For every $\Gamma\in\cC(T)$ we have 
\begin{equation*}
\dom(\widetilde S(\Gamma))=\dom(F)+D_{\Gamma_2^*}(I+\Gamma)D_{\Gamma_2^*}\ker(I+S^*).
\end{equation*}
\end{proposition}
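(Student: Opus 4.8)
The plan is to read the formula off from~\eqref{e:domes} once the parameter realizing the Friedrichs extension has been pinned down. By Proposition~\ref{p:sef} we have $F=\widetilde S_\mathrm{F}=\widetilde S(-I)$, so~\eqref{e:domes} applied with the parameter $-I$ already describes $\dom(F)$, while~\eqref{e:domes} applied with the given $\Gamma$ describes $\dom(\widetilde S(\Gamma))$. Thus everything reduces to comparing the two ``column operators'' occurring in~\eqref{e:domes} for these two choices of parameter.

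Carrying this out, I would note that the two column operators share the upper block $D_A\Gamma_2^*$ and that their lower blocks are $I-\Gamma_2 A\Gamma_2^*+D_{\Gamma_2^*}\Gamma D_{\Gamma_2^*}$ (for $\Gamma$) and $I-\Gamma_2 A\Gamma_2^*-D_{\Gamma_2^*}^2$ (for $-I$, recalling $D_{\Gamma_2^*}(-I)D_{\Gamma_2^*}=-D_{\Gamma_2^*}^2$). Subtracting, the difference of the two lower blocks is
\[
D_{\Gamma_2^*}\Gamma D_{\Gamma_2^*}+D_{\Gamma_2^*}^2=D_{\Gamma_2^*}(I+\Gamma)D_{\Gamma_2^*}
\]
on $\cH\ominus\dom(T)$; this can equivalently be read straight from~\eqref{e:tep} with $\Gamma^\prime=\Gamma$, $\Gamma^{\prime\prime}=-I$, since then $\widetilde T(\Gamma)-\widetilde T(-I)$ has the single nonzero entry $D_{\Gamma_2^*}(I+\Gamma)D_{\Gamma_2^*}$ in the lower--right corner.

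It then remains to assemble the conclusion: for each $\zeta\in\ker(I+S^*)$, the vector that~\eqref{e:domes} attaches to $\zeta$ for the parameter $\Gamma$ equals the vector it attaches to $\zeta$ for the parameter $-I$, plus $D_{\Gamma_2^*}(I+\Gamma)D_{\Gamma_2^*}\zeta$ viewed inside $\cH\ominus\dom(T)\subseteq\cH$. Adding $\dom(S)$ and letting $\zeta$ range over $\ker(I+S^*)$, the first terms fill out $\dom(F)$ and simultaneously the corrections fill out $D_{\Gamma_2^*}(I+\Gamma)D_{\Gamma_2^*}\ker(I+S^*)$, which is exactly the asserted identity.

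The one genuinely substantial ingredient is Proposition~\ref{p:sef}; granting it, the rest is the short algebraic identity above together with the bookkeeping of the decomposition~\eqref{e:domes}. The only place requiring care is that bookkeeping: on the right--hand side the element of $\dom(F)$ and the correction $D_{\Gamma_2^*}(I+\Gamma)D_{\Gamma_2^*}\zeta$ are to be taken with the same $\zeta$, and one must keep the two orthogonal decompositions straight (using $I-D_{\Gamma_2^*}^2=\Gamma_2\Gamma_2^*$ on $\cH\ominus\dom(T)$, exactly as in~\eqref{e:tt1}--\eqref{e:ttm1}).
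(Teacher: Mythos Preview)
Your proposal is correct and follows essentially the same route as the paper's proof: both invoke Proposition~\ref{p:sef} to identify $F=\widetilde S(-I)$, compute $\widetilde T(\Gamma)-\widetilde T(-I)=\left[\begin{smallmatrix}0&0\\0&D_{\Gamma_2^*}(I+\Gamma)D_{\Gamma_2^*}\end{smallmatrix}\right]$, and read off the domain formula. The only cosmetic difference is that you phrase the computation through the column operators of~\eqref{e:domes}, whereas the paper writes out $I+\widetilde T(\Gamma)$ as a full $2\times 2$ block and subtracts $I+\widetilde T(-I)$ directly; these are the same algebraic identity viewed from two angles.
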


\begin{proof} For arbitrary $\Gamma\in\cC(T)$, by \eqref{e:tte} and \eqref{e:ttm1}, we have
\begin{align*}
I+\widetilde T(\Gamma) & = \left[\begin{array}{ll} I+ A & D_{A}\Gamma_2^* \\ \Gamma_2 D_A &
\Gamma_2 (I-A)^* \Gamma_2^*+D_{\Gamma_2^*}(I+\Gamma) D_{\Gamma_2^*}
\end{array}\right]\\
& = I+\widetilde T(-I)+ \left[\begin{array}{ll} 0 & 0 \\ 0 &
D_{\Gamma_2^*}(I+\Gamma) D_{\Gamma_2^*}
\end{array}\right],
\end{align*}
hence, from Proposition~\ref{p:sef}, we get
\begin{align*}
\dom(\widetilde S(\Gamma)) & = \ran(I+\widetilde T(\Gamma)) \\
& = \ran(I+\widetilde T(-I))+D_{\Gamma_2^*}(I+\Gamma)D_{\Gamma_2^*} (\cH\ominus\dom(T))\\
& = \dom(F)+D_{\Gamma_2^*}(I+\Gamma)D_{\Gamma_2^*}\ker(I+S^*).\qedhere
\end{align*}
\end{proof}

\end{document}